\def\Ortho{\mathrm{O}}
\newcommand{\Irr}{\operatorname{Irr}}
\newcommand{\id}{\operatorname{id}}
\newcommand{\GL}{\operatorname{GL}}
\newcommand{\Ker}{\operatorname{Ker}}
\newcommand{\End}{\operatorname{End}}
\newcommand{\Isom}{\operatorname{Isom}}
\newcommand{\im}{\operatorname{Im}}
\newcommand{\Sp}{\operatorname{Sp}}
\renewcommand{\setminus}{\smallsetminus}
\def\F{\mathbb{F}}
\def\K{\mathbb{K}}
\def\C{\mathbb{C}}
\def\Z{\mathbb{Z}}
\renewcommand{\L}{\mathbb{L}}
\def\lcro{\mathopen{[\![}}
\def\rcro{\mathclose{]\!]}}
\theoremstyle{definition}
\newtheorem{Def}{Definition}[section]
\theoremstyle{plain}
\newtheorem{theo}{Theorem}[section]
\newtheorem{prop}[theo]{Proposition}
\theoremstyle{plain}
\theoremstyle{remark}
\newtheorem{Rems}{Remarks}
\newtheorem{Rem}[Rems]{Remark}
\title{Products of two involutions in orthogonal and symplectic groups}
\author{Cl\'ement de Seguins Pazzis\footnote{Universit\'e de Versailles Saint-Quentin-en-Yvelines, Laboratoire de Math\'ematiques
de Versailles, 45 avenue des Etats-Unis, 78035 Versailles cedex, France}
\footnote{e-mail address: dsp.prof@gmail.com}}
\begin{document}

\thispagestyle{plain}

\maketitle

\begin{abstract}
An element of a group is called bireflectional when it is the product of two involutions of the group (i.e.\ elements of order $1$ or $2$).
If an element is bireflectional then it is conjugated to its inverse.

It is known that all elements of orthogonal groups of quadratic forms are bireflectional (\cite{Wonenburger} for fields of characteristic not $2$,
\cite{Gow} for fields of characteristic $2$). F. B\"unger \cite{Bungerthesis} has characterized the elements of unitary groups (over fields of characteristic not $2$)
that are bireflectional. Yet in symplectic groups over fields with characteristic different from $2$, in general there are elements that are conjugated to their
inverse but are not bireflectional (however, over fields of characteristic $2$, every element of a symplectic group is
bireflectional, see \cite{Gow}).

In this article, we characterize the bireflectional elements of symplectic groups in terms of Wall invariants, over fields of characteristic not $2$: the result is cited without proof in B\"unger's PhD thesis, and attributed to Klaus Nielsen. We also take advantage of our approach to give a simplified proof of Wonenburger's corresponding result for orthogonal groups and general linear groups.
\end{abstract}

\vskip 2mm
\noindent
\emph{AMS Classification:} 15A23; 11E04

\vskip 2mm
\noindent
\emph{Keywords:} Orthogonal group, Symplectic group, Decomposition, Involution, Canonical form, Wall invariants.

% relecture 4/4 en cours

\section{Introduction}

\subsection{The problem}

\begin{Def}
Let $G$ be a group. An involution of $G$ is an element $x$ such that $x^2=1_G$.
An element of $G$ is called \textbf{bireflectional} when it is the product of two involutions of $G$.
\end{Def}

A basic remark is that every bireflectional element is conjugated to its inverse: indeed, if $g=ab$ where $a^2=b^2=1_G$, then
$g^{-1}=b^{-1}a^{-1}=ba=bg b^{-1}$.

Let $\F$ be a field of characteristic different from $2$, whose group of units we denote by $\F^*$.
In this article, all the vector spaces under consideration are assumed finite-dimensional.

Let $V$ be such a vector space.
Throughout, we consider a bilinear form $b : V \times V \rightarrow \F$, either symmetric
($\forall (x,y)\in V^2, \; b(x,y)=b(y,x)$) or alternating (i.e. $\forall x \in V, \; b(x,x)=0$).
Note that $b$ is alternating if and only if it is skew-symmetric, that is $\forall (x,y)\in V^2, \; b(x,y)=-b(y,x)$
(beware that this equivalence fails over fields of characteristic $2$, but we discard such fields in the present article).

We also assume that $b$ is non-degenerate, which means that $0_V$ is the only vector $x$ such that $b(x,-)=0$.
We say that $b$ is symplectic when it is non-degenerate and alternating.
If $b$ is non-degenerate, then for all $u \in \End(V)$ there is a unique $u^\star \in \End(V)$, called the $b$-adjoint of $u$, such that
$$\forall (x,y)\in V^2, \; b(u^\star(x),y)=b(x,u(y)).$$
Here, we focus on the group of isometries
$$\Isom(b):=\{u \in \GL(V) : \; \forall (x,y)\in V^2, \; b(u(x),u(y))=b(x,y)\}$$
of the bilinear form $b$ (i.e.\ the group of all automorphisms $u$ such that $u^\star=u^{-1}$).
This group is also denoted by $\Ortho(b)$ (the orthogonal group of $b$) when $b$ is symmetric, and
by $\Sp(b)$ (the symplectic group of $b$) when $b$ is alternating.

\vskip 3mm
In \cite{Wonenburger}, Wonenburger proved that if $b$ is symmetric and non-degenerate
then every element of $\Ortho(b)$ is bireflectional (her article is also widely known for proving that, in a general linear
group over a field of characteristic different from $2$, the bireflectional elements are the elements that are conjugated to their inverse).
She also noted that this result failed in symplectic groups over fields of characteristic different from $2$ (even discounting the obvious example of $2$-dimensional spaces, in which the sole symplectic involutions are $\pm \id_V$). However, Gow \cite{Gow} showed that, over fields of characteristic $2$,
every element of a symplectic group is bireflectional, and so is every element of the orthogonal group of a regular quadratic form.
Finally, in his PhD thesis \cite{Bungerthesis}, B\"unger characterized the bireflectional elements of unitary groups over fields with characteristic not $2$
(see also \cite{Bungerproc}).

Let us come back to the special case of the symplectic group $\Sp(b)$ when $b$ is symplectic (remember that $\F$ does not have characteristic $2$).
In \cite{dLC}, de La Cruz exhibited, over the field of complex numbers, a symplectic group in which there exists an element that cannot be decomposed as
the product of three involutions. Moreover he gave, over the field of complex numbers, a full characterization of the elements of $\Sp(b)$
that are bireflectional (in terms of their rational canonical form). Awa and him recently showed \cite{Awa} that the characterization
essentially fails over the field of real numbers, in the sense that there exists a real symplectic matrix which is the product
of two complex involutory symplectic matrices but fails to be the product of two real involutory symplectic matrices.
Actually, the full characterization of bireflectional elements of $\Sp(b)$ was already announced by B\"unger \cite{Bungerthesis} in his PhD thesis more
than twenty years ago, citing a mysterious article of K. Nielsen (see reference 56 therein). However, it turns out that Nielsen's article was never published,
which lead us to writing the present article in order to fill the gap.
As the techniques are fairly similar and there is recently a renewed interest in such questions
(see \cite{Awa,dLC,dSPsumprod,dSPsumexceptional,dSPsum3}), we will use the opportunity to give a clear account of the proofs of Wonenburger's result
(B\"unger's result on unitary groups could be proved with a similar method but the proof is too long to be included in this article).
In a forthcoming article, we will study the corresponding situation where
involutions are replaced with unipotent automorphisms of index $2$ (i.e.\ automorphisms $u$ such that $(u-\id_V)^2=0$).

Most of our results (and methods) involve Wall's classification of the conjugacy classes in the orthogonal and symplectic groups
\cite{Wall}: the fundamental invariants are recalled in the next three sections.

\subsection{The viewpoint of pairs}

It appears that a more efficient viewpoint on our problem is to consider pairs consisting of a form and of an isometry for this form.
More precisely:

\begin{Def}
A \textbf{$1$-isopair} $(b,u)$ consists of a non-degenerate symmetric bilinear form $b$ over a finite-dimensional vector space $V$
over $\F$, and of an isometry $u \in \Ortho(b)$.

A \textbf{$-1$-isopair} $(b,u)$ consists of a symplectic bilinear form $b$ over a finite-dimensional vector space $V$
over $\F$, and of an isometry $u \in \Sp(b)$.
\end{Def}

Let $\varepsilon \in \{-1,1\}$. Two $\varepsilon$-isopairs $(b,u)$ and $(c,v)$, with underlying vector spaces $U$ and $V$, are called \textbf{isometric} whenever there exists a vector space isomorphism $\varphi : U \overset{\simeq}{\longrightarrow} V$ such that
$$\forall (x,y)\in U^2, \; c(\varphi(x),\varphi(y))=b(x,y) \quad \text{and} \quad \varphi \circ u \circ \varphi^{-1}=v.$$
This defines an equivalence relation on the collection of all $\varepsilon$-isopairs over $\F$.

Next, the orthogonal sum of two $\varepsilon$-isopairs $(b,u)$ and $(c,v)$, with underlying vector spaces $U$ and $V$,
is defined as the pair $(b,u) \bot (b',u'):=(b \bot b', u \oplus u')$, so that
$$\forall (x,y)\in U \times V, \; \forall (x',y')\in U \times V, \quad
(b \bot b')((x,y),(x',y'))=b(x,x')+b'(y,y')$$
and
$$\forall (x,y)\in U \times V, \; (u \oplus u')(x,y)=(u(x),u'(y)).$$
One checks that orthogonal sums are compatible with isometries (i.e. replacing one summand with an isometric summand yields isometric sums).

Finally, let $(b,u)$ be an $\varepsilon$-isopair, with underlying vector space $U$, and $V$ be a linear subspace of $U$
that is $b$-regular (i.e.\ $V \cap V^{\bot_b}=\{0\}$), so that $b$ induces a non-degenerate bilinear form $b_V$ on $V$, and
$V$ is also stable under $u$, yielding an induced endomorphism $u_V$ of $V$.
Then, $(b,u)^V:=(b_V,u_V)$ is an $\varepsilon$-isopair, and so is $(b,u)^{V^{\bot_b}}$.

\begin{Def}
An $\varepsilon$-isopair $(b,u)$ is called \textbf{bireflectional} when $u$ is bireflectional in the isometry group of $b$,
i.e.\ there exist involutions $s_1$ and $s_2$ of the underlying vector space of $(b,u)$ such that $u=s_1s_2$, and $(b,s_1)$ and $(b,s_2)$ are
$\varepsilon$-isopairs.
\end{Def}

\begin{Rem}
If an $\varepsilon$-isopair is bireflectional, then so is any $\varepsilon$-isopair that is isometric to it.
\end{Rem}

\begin{Rem}
Let $(b,u)$ and $(b',u')$ be bireflectional $\varepsilon$-isopairs.
Let us choose involutions $s_1,s_2$ in the isometry group of $b$, and involutions $s'_1,s'_2$ in the isometry group of $b'$, such that
$u=s_1s_2$ and $u'=s'_1s'_2$. Then, with $S_1:=s_1 \oplus s'_1$ and $S_2:=s_2 \oplus s'_2$, one sees that $u \oplus u'=S_1S_2$ and that
$S_1$ and $S_2$ are involutions in the isometry group of $b \bot b'$.
Hence, the $\varepsilon$-isopair $(b,u) \bot (b',u')$ is bireflectional.
\end{Rem}

The converse fails. In theory, it is possible to have $(b,u) \bot (b',u')$ bireflectional
without having $(b,u)$ or $(b',u')$ bireflectional.

\begin{Def}
An $\varepsilon$-isopair is called \textbf{indecomposable} when it is nontrivial (i.e.\ defined on a nonzero
vector space) and it is not isometric to the orthogonal direct sum of two non-trivial isopairs
(in other words, such a pair $(b,u)$, over a space $V$, is indecomposable when it is nonzero and there is no $b$-orthogonal decomposition
$V=V_1 \overset{\bot_b}{\oplus} V_2$ in which $V_1$ and $V_2$ are nonzero and stable under $u$).
\end{Def}

\subsection{A review of conjugacy classes and extensions of bilinear forms}\label{section:formextension}

We denote by $\Irr(\F)$ the set of all irreducible polynomials $p \in \F[t]$ such that $p \neq t$ (beware that
in \cite{dSPsum2quad} this notation did not exclude the polynomial $t$).

Let $V$ be a vector space over $\F$.
The classification of conjugacy classes in the algebra $\End(V)$ is well known: here, we will use the viewpoint of the primary invariants.
Every endomorphism $u$ of $V$ is the direct sum of cyclic endomorphisms whose minimal polynomials are powers of
monic irreducible polynomials: those minimal polynomials are uniquely determined up to permutation, and they are called the
primary invariants of $u$. For $p \in \Irr(\F) \cup \{t\}$ and $r \geq 1$, we denote by $n_{p,r}(u)$ the number of summands with minimal polynomial
$p^r$ in such a decomposition (it is the \textbf{Jordan number} of $u$ attached to the pair $(p,r)$).
The similarity class of $u$ is then determined by the data of the family $(n_{p,r}(u))_{p \in \Irr(\F) \cup \{t\}, r \geq 1}$.

Now, let $u$ be a $b$-isometry.
For $u$ to be bireflectional in $\Isom(b)$, it is necessary that $u$ be similar to $u^{-1}$. However, this is automatic for a $b$-isometry because
classically $u^\star$ is similar to $u$!

Given a monic polynomial $p(t)$ of degree $d$ such that $p(0) \neq 0$, we denote by
$$p^\sharp(t):=p(0)^{-1}t^d p(1/t)$$
the reciprocal polynomial of $p$ (note that $(p^\sharp)^\sharp=p$), and we say that $p$ is a \textbf{palindromial}
whenever $p^\sharp=p$.
For an irreducible palindromial $p \in \Irr(\F)$, either $p=t\pm 1$ or the degree of $p$ is even and $p(0)=1$.
An automorphism $u$ of a vector space $V$ is similar to its inverse if and only if, for all
$p \in \Irr(\F)$ such that $p^\sharp \neq p$, and for all $r \geq 1$, one has $n_{p,r}(u)=n_{p^\sharp,r}(u)$.

Now, let $p \in \Irr(\F)$ be a palindromial that is different from $t+ 1$ and $t-1$. Denoting by $2d$ the degree of $p$, it is known that
$p(t)=t^d R(t+t^{-1})$ for a unique $R \in \Irr(\F)$ of degree $d$.
Consider the field $\L:=\F[t]/(p)$. The classes of $t$ and its inverse are roots of $p$ in $\L$, and hence
there is a unique field automorphism $x \mapsto x^\bullet$ of $\L$ that is an automorphism of $\F$-algebra and that takes
the class of $t$ to its inverse. This allows use to see $\L$ as a field with a non-identity involution, and we shall consider Hermitian forms
over $\L$ with respect to this involution in the remainder of the article. Moreover,
the subfield $\K:=\{\lambda \in \L : \; \lambda^\bullet=\lambda\}$ is isomorphic to $\F[t]/(R)$
through the isomorphism $\psi$ that takes the class $\overline{t}+\overline{t}^{-1}$ to the class of $t$ modulo $R$.
We consider the $\F$-linear form $e_R : \F[t]/(R) \rightarrow \F$ that takes the class of $1$ to $1$, and the class of $t^k$ to $0$ for all
$k \in \lcro 1,d-1\rcro$,
and finally we set
$$f_p : \lambda \in \L \mapsto e_R(\psi(\lambda+\lambda^\bullet)) \in \F.$$
Hence, $f_p$ is a nonzero linear form on the $\F$-vector space $\L$, and it is invariant under the involution $\lambda \mapsto \lambda^\bullet$.

The next step is a general construction: let $V$ be an $\L$-vector space, and
$B : V^2 \rightarrow \F$ be an $\F$-bilinear form such that
$\forall (x,y)\in V^2, \; \forall \lambda \in \mathbb{L}, \; B(\lambda^\bullet x,y)=B(x,\lambda y)$.
For all $(x,y)\in \L^2$, the mapping $\lambda \mapsto B(x,\lambda y)$ is an $\F$-linear form on
$V$ and hence it reads $\lambda \mapsto f_p(\lambda B^\L(x,y))$ for a unique $B^\L(x,y)\in \L$.
This yields a mapping $B^\L : V^2 \rightarrow \L$ and one checks that it is $\F$-bilinear and even $\L$-right-linear.
Moreover, if $B$ is non-degenerate then so is $B^\L$.
Now, assume that $B$ is non-degenerate and symmetric. Let $(x,y) \in V^2$. Then, for all $\lambda \in \F$,
$$B(y,\lambda x)=B(\lambda^\bullet y,x)=B(x,\lambda^\bullet y)=f_p\bigl(\lambda^\bullet B^\L(x,y)\bigr)=f_p\bigl(\lambda B^\L(x,y)^\bullet\bigr)$$
and hence $B^{\mathbb{L}}(y,x)=B^\L(x,y)^\bullet$. Likewise, if $B$ is skew-symmetric and non-degenerate one finds
$B^{\mathbb{L}}(y,x)=-B^\L(x,y)^\bullet$. To sum up:
\begin{itemize}
\item If $B$ is symmetric and non-degenerate, then $B^{\mathbb{L}}$ is a non-degenerate Hermitian form on the $\L$-vector space $V$;
\item If $B$ is symplectic then $B^{\mathbb{L}}$ is a non-degenerate skew-Hermitian form on the $\L$-vector space $V$.
\end{itemize}
Remember finally that skew-Hermitian forms are in one-to-one correspondence with Hermitian forms: by taking an element
$\eta\in \L \setminus \{0\}$ such that $\eta^\bullet =-\eta$ (such an element always exists, e.g.\ $\overline{t}-\overline{t}^{-1}$), the Hermitian form $h$ gives rise to the
skew-Hermitian form $\eta h$ and vice versa.

\subsection{A review of conjugacy classes in orthogonal and symplectic groups}

Let now $(b,u)$ be an $\varepsilon$-isopair, and let $r \geq 1$ be a positive integer.
Let $p \in \Irr(\F) \setminus \{t+1,t-1\}$ be an irreducible palindromial of degree $2d$, and write $p(t)=t^d m(t+t^{-1})$ with $m \in \F[t]$ monic and
irreducible.
Set $v=u+u^{-1}=u+u^\star$, which is $b$-selfadjoint.

We denote by $V_{p,r}$ the cokernel of the (injective) linear map
$$\Ker m(v)^{r+1}/\Ker m(v)^r \longrightarrow \Ker m(v)^{r}/\Ker m(v)^{r-1}$$
induced by $m(v)$. This cokernel is naturally identified with the quotient space
$\Ker m(v)^r/(\Ker m(v)^{r-1}+(\im m(v) \cap \Ker m(v)^r))$.
We consider the bilinear form induced by
$$b_{p,r} : (x,y) \mapsto b(x,m(v)^{r-1}[y])$$
on $\Ker m(v)^{r}$.
Noting that $m(v)$ is $b$-selfadjoint, we obtain that $b_{p,r}$ is symmetric if $\varepsilon=1$, and
skew-symmetric if $\varepsilon=-1$.
The radical of $b_{p,r}$ is the intersection of $\Ker m(v)^r$
with the inverse image of $(\Ker m(v)^r)^{\bot_b}=\im m(v)^r$ under $m(v)^{r-1}$, and one easily
checks that this inverse image equals $\Ker m(v)^{r-1}+(\Ker m(v)^r \cap \im m(v))$.
Hence, $b_{p,r}$ induces a non-degenerate $\F$-bilinear form $\overline{b_{p,r}}$ on $V_{p,r}$,
a form that is symmetric if $\varepsilon=1$, skew-symmetric if $\varepsilon=-1$.

The quotient spaces $\Ker m(v)^{r+1}/\Ker m(v)^r$ and $\Ker m(v)^{r}/\Ker m(v)^{r-1}$ are naturally seen as vector spaces over $\L$, and so is
the cokernel $V_{p,r}$. Since $u$ is a $b$-isometry it turns out that
$\overline{b_{p,r}}(x,\lambda y)=\overline{b_{p,r}}(\lambda^\bullet x,y)$ for all
$\lambda \in \L$ and all $(x,y)\in (V_{p,r})^2$.
Hence, by using the extension process described in Section \ref{section:formextension}, we recover a non-degenerate form
$\overline{b_{p,r}}^\L$ on the $\L$-vector space $V_{p,r}$, either Hermitian if $b$ is symmetric, or skew-Hermitian if $b$ is symplectic.
In any case, we denote by $(b,u)_{p,r}$ this form and call it the \textbf{Hermitian Wall invariant} of $(b,u)$ attached to $(p,r)$.

The following facts are easily checked:
\begin{itemize}
\item Given an $\varepsilon$-isopair $(b',u')$ that is equivalent to $(b,u)$, the Hermitian or skew-Hermitian forms
$(b,u)_{p,r}$ and $(b',u')_{p,r}$ are equivalent;
\item Given another $\varepsilon$-isopair $(b',u')$, one has
$((b,u) \bot (b',u'))_{p,r} \simeq (b,u)_{p,r} \bot (b',u')_{p,r}$.
\end{itemize}

There are two additional sets of invariants, and here we must differentiate more profoundly between symmetric and
alternating forms.
Assume first that $\varepsilon=1$, and let $r=2k+1$ be an odd positive integer.
Again, we set $v:=u+u^{-1}$ and we note that $(v-2\id_V)^k=(-1)^k (u-\id_V)^k (u^{-1}-\id_V)^k$.
Like in the above, the symmetric bilinear form
$(x,y) \mapsto b(x,(v-2\id_V)^k(y))$ induces a non-degenerate symmetric bilinear form
$(b,u)_{t-1,2k+1}$ on $\Ker (u-\id)^r/\bigl(\Ker (u-\id)^{r-1}+(\im (u-\id) \cap \Ker (u-\id)^r)\bigr)$.
Likewise,
$(x,y) \mapsto b(x,(v+2\id_V)^k(y))$ induces a non-degenerate symmetric bilinear form
$(b,u)_{t+1,2k+1}$ on $\Ker (u+\id)^r/\bigl(\Ker (u+\id)^{r-1}+(\im (u+\id) \cap \Ker (u+\id)^r)\bigr)$.
These are the \textbf{quadratic Wall invariants} of $(b,u)$.

The classification of $1$-isopairs is then given in the next theorem. It will not be necessary for the proof of Wonenburger's theorem, but
it is interesting to state it in parallel with the corresponding result on symplectic groups.

\begin{theo}[Wall's theorem for orthogonal groups, see \cite{Wall}]
Let $(b,u)$ and $(b',u')$ be two $1$-isopairs over a field $\F$ with characteristic different from $2$.
For $(b,u)$ to be isometric to $(b',u')$, it is necessary and sufficient that all the following conditions hold:
\begin{enumerate}[(i)]
\item For all $p \in \Irr(\F)$ such that $p \neq p^\sharp$, and all $r \geq 1$, one has $n_{p,r}(u)=n_{p,r}(u')$;
\item For all $p \in \Irr(\F) \setminus \{t+1,t-1\}$ such that $p=p^\sharp$, and all $r \geq 1$,
the Hermitian forms $(b,u)_{p,r}$ and $(b',u')_{p,r}$ are equivalent;
\item For every $\eta \in \{-1,1\}$ and every odd integer $r \geq 1$, the symmetric bilinear forms
$(b,u)_{t-\eta,r}$ and $(b',u')_{t-\eta,r}$ are equivalent.
\end{enumerate}
\end{theo}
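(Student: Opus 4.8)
The plan is to reduce the classification to a direct-sum decomposition of the isopair $(b,u)$ indexed by the irreducible factors of the characteristic polynomial of $u$, and then to treat each "primary block" separately. First I would use the fact that $u$ commutes with its $b$-adjoint $u^\star=u^{-1}$ to see that $\F[u]$ is a commutative $b$-selfadjoint subalgebra of $\End(V)$; decomposing $V$ according to the primary decomposition of $u$ (equivalently, of $v=u+u^{-1}$ together with the generalized eigenspaces for $\pm 1$), one gets a $b$-orthogonal decomposition into $u$-stable pieces, because distinct primary components are $b$-orthogonal (the pairing between $\Ker q(u)^\infty$ and $\Ker \tilde q(u)^\infty$ vanishes unless $\tilde q = q^\sharp$, using $b(u x,y)=b(x,u^{-1}y)$). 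This splits the problem into: (a) pieces where the relevant irreducible $p$ satisfies $p\neq p^\sharp$, which come in "hyperbolic" pairs $\Ker p(u)^\infty \oplus \Ker p^\sharp(u)^\infty$ whose isometry type is determined purely by the Jordan numbers $n_{p,r}(u)$ (this gives condition (i)); and (b) pieces attached to a palindromial $p=p^\sharp$, split further into the cases $p\in\{t-1,t+1\}$ and $p\notin\{t-1,t+1\}$.

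For case (b) with $p \notin \{t-1,t+1\}$, the point is that, after passing to the field $\L=\F[t]/(p)$ as in Section~\ref{section:formextension}, the piece $\Ker m(v)^\infty$ becomes an $\L$-vector space on which $u$ acts $\L$-linearly, and the filtration by the $\Ker m(v)^r$ is a filtration by $\L$-subspaces; the associated graded pieces $V_{p,r}$ carry the non-degenerate Hermitian forms $(b,u)_{p,r}$. I would then invoke the structure theory of Hermitian (resp. skew-Hermitian) forms together with the corresponding statement for the \emph{unitary} group: a $b$-selfadjoint nilpotent operator on a Hermitian space, here $m(v)$ restricted to $\Ker m(v)^\infty$, is classified up to unitary congruence by the sequence of Hermitian forms it induces on the successive quotients — this is the classical "Jordan form for selfadjoint operators on Hermitian spaces" argument (build a symplectic-type basis adapted to the filtration by lifting orthogonal bases of the quotients and correcting lower-order terms). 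This recovers the block from the data in condition (ii).

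Case (b) with $p\in\{t-1,t+1\}$ is the heart of the matter and where the argument genuinely differs from the unitary case. Here $u = \pm\id + (\text{nilpotent})$, $v\mp 2\id$ is nilpotent and $b$-selfadjoint on $\Ker(u\mp\id)^\infty$, and one is exactly in the situation of classifying a selfadjoint nilpotent operator on a \emph{symmetric} bilinear space over $\F$ itself. The new subtlety is that on the odd-index graded pieces $r=2k+1$ the induced form $(x,y)\mapsto b(x,(v\mp2\id)^k y)$ is symmetric and non-degenerate over $\F$ — so it has a genuine isometry invariant (discriminant and, for real-type $\F$, signature), giving condition (iii) — whereas on the even-index pieces the induced form is alternating, hence has no invariant beyond its dimension (which is already encoded by a Jordan number). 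So the classification reduces to: symmetric nilpotent operators on a symmetric $\F$-space are classified by the dimensions of the even-index quotients and the isometry classes of the odd-index quotient forms. I expect this last reduction — constructing, by induction on the nilpotency index, a $b$-orthogonal decomposition of $\Ker(u\mp\id)^\infty$ into cyclic $b$-selfadjoint blocks realizing prescribed quotient forms, and checking that two such collections with the same invariants are isometric — to be the main obstacle; the delicate steps are the "lifting with correction" to split off a single cyclic summand $\F[u]x$ that is $b$-regular, and Witt-cancellation to run the induction. I would isolate this as a standalone lemma about $b$-selfadjoint operators (equivalently, the $\varepsilon=1$, $p=t\mp1$ case of the pair-classification), prove it by that induction, and then assemble all the blocks via the orthogonal-sum compatibility of all three families of invariants to conclude both directions of the stated equivalence.
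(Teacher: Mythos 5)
The paper does not prove this statement: it is quoted as a known result from Wall's 1963 article \cite{Wall}, and the text explicitly notes it is ``not necessary for the proof of Wonenburger's theorem.'' There is therefore no in-paper proof to compare against; I will assess your plan on its own.

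Your outline is the canonical route to Wall's classification: orthogonally decompose $V$ according to the generalized $p$-primary components of $u$; observe that $E_p$ and $E_{p^\sharp}$ pair hyperbolically when $p \neq p^\sharp$ (giving condition (i)); and for palindromials, classify by the forms induced on graded quotients, either Hermitian over $\L = \F[t]/(p)$ when $p \neq t \pm 1$ (condition (ii)) or symmetric/alternating over $\F$ when $p = t \pm 1$ (condition (iii), with the even-indexed quotients contributing nothing because the induced form there is alternating). That architecture is right. One claim is false as written, though: $\Ker m(v)^\infty$ is \emph{not} an $\L$-vector space with $u$ acting $\L$-linearly, since $p(u)$ need not vanish on it (only some power of it does), and likewise the subspaces $\Ker m(v)^r$ are not $\L$-subspaces of anything; only the successive quotients and the cokernels $V_{p,r}$ acquire $\L$-structures. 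The correct setting for the ``Hermitian Jordan form'' step is either to extend $b$, by the same linear-form device that produces $B^\L$, to a Hermitian form valued in the Artinian local ring $\F[t]/(p^N)$ and classify Hermitian modules over that ring, or to work entirely over $\F$ by induction on the nilpotency index of $p(u)$, splitting off $b$-regular cyclic summands --- which is exactly the induction you describe for $p = t\pm 1$ and in fact runs uniformly across all palindromials. With that correction your reduction is sound, but, as you yourself flag, the genuine content of Wall's theorem is the inductive lift-and-correct lemma (carve off a $b$-regular cyclic block realizing a prescribed top quotient form, then apply Witt cancellation), and your plan identifies this step without carrying it out.
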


Next, we state the classification of $-1$-isopairs.
Let $(b,u)$ be such an isopair, and set $v:=u+u^{-1}$.
Let $r=2k+2$ be an even positive integer.
The bilinear form $(x,y) \mapsto b(x,(u-u^{-1})(v-2\id_V)^k(y))$ is symmetric.
Noting that $(u-u^{-1})(v-2\id_V)^k=(-1)^k u^{-(k+1)}(u+\id_V) (u-\id_V)^{2k+1}$, one finds that
this bilinear form induces a non-degenerate symmetric bilinear form on the
quotient space $\Ker (u-\id_V)^r/\bigl(\Ker (u-\id_V)^{r-1}+(\im (u-\id_V) \cap \Ker (u-\id_V)^r)\bigr)$,
and we denote this form by $(b,u)_{t-1,r}$.
Likewise, $(x,y) \mapsto b(x,(u-u^{-1})(v+2\id_V)^k(y))$
 induces a non-degenerate symmetric bilinear form on the
quotient space $\Ker (u+\id_V)^r/\bigl(\Ker (u+\id_V)^{r-1}+(\im (u+\id_V) \cap \Ker (u+\id_V)^r)\bigr)$,
and we denote this form by $(b,u)_{t+1,r}$.
These are the \textbf{quadratic Wall invariants} of $(b,u)$.

\begin{theo}[Wall's theorem for symplectic groups, see \cite{Wall}]
Let $(b,u)$ and $(b',u')$ be two $-1$-isopairs over a field $\F$ with characteristic different from $2$.
For $(b,u)$ to be isometric to $(b',u')$, it is necessary and sufficient that:
\begin{enumerate}[(i)]
\item For all $p \in \Irr(\F)$ such that $p \neq p^\sharp$, and all $r \geq 1$, one has $n_{p,r}(u)=n_{p,r}(u')$;
\item For all $p \in \Irr(\F) \setminus \{t+1,t-1\}$ such that $p=p^\sharp$, and all $r \geq 1$,
the skew-Hermitian forms $(b,u)_{p,r}$ and $(b',u')_{p,r}$ are equivalent;
\item For every $\eta \in \{-1,1\}$ and every even integer $r \geq 2$, the symmetric bilinear forms
$(b,u)_{t-\eta,r}$ and $(b',u')_{t-\eta,r}$ are equivalent.
\end{enumerate}
\end{theo}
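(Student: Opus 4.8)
The plan is to prove necessity and sufficiency separately, the former being routine and the latter proceeding by reduction to ``primary'' isopairs.

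\emph{Necessity} is immediate: each of the three families of invariants is produced from $(b,u)$ by a construction natural in $(b,u)$ --- the Jordan numbers $n_{p,r}(u)$ from the $\F[t]$-module structure of $u$, the skew-Hermitian forms $(b,u)_{p,r}$ from $v=u+u^{-1}$ and the scalar-extension process of Section~\ref{section:formextension}, and the quadratic forms $(b,u)_{t\mp1,r}$ from the operators $(u-u^{-1})(v\mp2\,\id_V)^k$ --- so any isometry of isopairs transports each invariant to the corresponding invariant of the other pair. For \emph{sufficiency}, I would first partition $\Irr(\F)$ into the blocks $\{t-1\}$, $\{t+1\}$, the singletons $\{p\}$ with $p=p^\sharp$ and $p\notin\{t\pm1\}$, and the pairs $\{p,p^\sharp\}$ with $p\ne p^\sharp$; using $u^\star=u^{-1}$ one checks that the characteristic subspace of $V$ attached to such a block (the kernel of a suitable power of $t\mp1$, $p$, or $pp^\sharp$) is $u$-stable, $b$-regular, and $b$-orthogonal to the characteristic subspace of every other block. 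This yields a canonical $b$-orthogonal, $u$-stable decomposition of $(b,u)$ into primary isopairs, each involving only one block, with all invariants supported on the appropriate summand; so it suffices to classify primary isopairs of the three types. When only $\{p,p^\sharp\}$ with $p\ne p^\sharp$ occurs, $\Ker\bigl(p(u)^{\dim V}\bigr)$ and $\Ker\bigl(p^\sharp(u)^{\dim V}\bigr)$ are $u$-stable, totally $b$-isotropic, and paired perfectly by $b$; this identifies one with the $\F[t]$-dual of the other, so the isometry class is determined by the $\F[t]$-module $\Ker\bigl(p(u)^{\dim V}\bigr)$, i.e.\ by the numbers $n_{p,r}(u)=n_{p^\sharp,r}(u)$, and condition~(i) suffices here.

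When only a palindromial $\{p\}$ with $p\notin\{t\pm1\}$ occurs, write $p(t)=t^dm(t+t^{-1})$ with $m$ irreducible; on the primary space $V$ the $b$-selfadjoint operator $m(v)$ is nilpotent, so $V$ is a faithful module over the local principal ideal ring $\F[v]/(m(v)^N)\cong\K[X]/(X^N)$, and the scalar-extension process of Section~\ref{section:formextension} upgrades $b$ to a non-degenerate skew-Hermitian form over $\L$, making $V$ a skew-Hermitian module over the $\bullet$-stable ring $\L[X]/(X^N)$. The key input is a Jordan splitting for non-degenerate $\varepsilon$-Hermitian modules over such a ring: the module is an orthogonal direct sum of homogeneous pieces, the height-$r$ piece being free over $\L[X]/(X^r)$ with a form induced by multiplication by $X^{r-1}$ from a non-degenerate $\varepsilon$-Hermitian form over the residue field $\L$, and that residual form is a complete invariant of the piece. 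Unwinding the normalization of the scalar extension identifies this residual form with $(b,u)_{p,r}$, so condition~(ii) suffices (the $\L$-rank of $(b,u)_{p,r}$ recovers $n_{p,r}(u)$, so nothing further is needed).

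When only $\{t-1\}$ occurs --- the case $\{t+1\}$ reducing to it via $u\mapsto-u$ --- we have $u=\id_V+n$ with $n$ nilpotent and $u\in\Sp(b)$, and the heart of the matter is the canonical form of a unipotent symplectic transformation in characteristic $\ne2$: $(b,u)$ is a $b$-orthogonal, $u$-stable sum of indecomposables, each being either a hyperbolic pair of Jordan blocks of a common size $r$, or a single Jordan block of even size $r$ carrying a form captured by a one-dimensional non-degenerate symmetric $\F$-bilinear datum. The alternating nature of $b$ rules out single Jordan blocks of odd size; accordingly, for odd $r$ the operator $(v-2\,\id_V)^{(r-1)/2}$ induces on the relevant quotient space a non-degenerate \emph{alternating} form, whose sole invariant is its rank $n_{t-1,r}(u)$ and which accounts for the size-$r$ hyperbolic pairs, while for even $r=2k+2$ a computation shows that $(x,y)\mapsto b\bigl(x,(u-u^{-1})(v-2\,\id_V)^k(y)\bigr)$ induces on the corresponding quotient the orthogonal sum of the one-dimensional data of the size-$r$ single blocks and a hyperbolic summand for the size-$r$ pairs. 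Since a symmetric bilinear form over a field of characteristic $\ne2$ is classified by its own invariants, the family $\bigl((b,u)_{t-1,r}\bigr)_{r\text{ even}}$, together with those ranks, determines the $t-1$ primary part; combined with conditions~(i) and~(ii) on the other primary parts, this establishes sufficiency.

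I expect the main obstacle to be the two structure theorems invoked above: the Jordan splitting of $\varepsilon$-Hermitian modules over $\L[X]/(X^N)$ and the canonical form of unipotent symplectic maps. Both are proved by the same filtration-and-induction scheme --- the top layer carries a non-degenerate induced form (this is exactly where characteristic $\ne2$ is essential), one splits it off together with a free complement and induces on the quotient --- but the delicate point is the parity arithmetic governing which single-block types an alternating (resp.\ symmetric) form admits, which is the very phenomenon separating the symplectic theorem from its orthogonal analogue. A secondary, purely bookkeeping difficulty is to verify that the residual forms extracted by these decompositions coincide on the nose with the specifically normalized invariants $(b,u)_{p,r}$ and $(b,u)_{t\mp1,r}$ built from $m(v)^{r-1}$, $(u-u^{-1})(v\mp2\,\id_V)^k$, and the linear form $f_p$.
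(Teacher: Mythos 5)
The paper cites this theorem from Wall \cite{Wall} without reproducing a proof, so there is no in-paper argument to compare against; I will assess the proposal on its own terms. Your architecture is the standard one for Wall's classification --- primary decomposition via the characteristic subspaces of the blocks $\{t-1\}$, $\{t+1\}$, $\{p\}$ with $p=p^\sharp$, $\{p,p^\sharp\}$ with $p\neq p^\sharp$; identification of the paired case with $H_{-1}$ of a module and its dual; Jordan splitting of $\varepsilon$-Hermitian modules over $\L[X]/(X^N)$ for the palindromial case; and the canonical form of a unipotent symplectic transformation for $t\pm1$ --- and that architecture is sound.

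There is, however, a genuine gap between what your argument uses and what conditions (i)--(iii) provide. In the $t-1$ primary analysis you write that the family $\bigl((b,u)_{t-1,r}\bigr)_{r\text{ even}}$ \emph{together with the ranks} $n_{t-1,r}(u)$ for odd $r$ determines the $t-1$ part, and you then conclude sufficiency from ``conditions (i) and (ii) on the other primary parts.'' But the odd-$r$ Jordan numbers $n_{t\pm1,r}(u)$ are not supplied by (i)--(iii): condition (i) only covers $p\neq p^\sharp$, condition (ii) only covers palindromials other than $t\pm1$, and condition (iii) only covers even $r$. The alternating forms you construct for odd $r$ are computed from $(b,u)$, but to conclude an isometry you need them to be matched as a \emph{hypothesis}, not read off from the conclusion. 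Indeed, taken literally the statement is too weak: on an $8$-dimensional symplectic space, the identity ($n_{t-1,1}=8$) and a unipotent $u'$ with Jordan type $(t-1)^1$ twice plus $(t-1)^3$ twice ($n_{t-1,1}=n_{t-1,3}=2$) have all the forms in (ii) and (iii) equal to zero, (i) is vacuous, yet the two isopairs are not isometric. What your proof actually establishes is the standard (correct) formulation, in which the similarity class of $u$ is required in full, i.e.\ $n_{p,r}(u)=n_{p,r}(u')$ for \emph{all} $p\in\Irr(\F)$, not merely $p\neq p^\sharp$. (For $p=p^\sharp\notin\{t\pm1\}$ and for $t\pm1$ with $r$ even this is subsumed by (ii) and (iii) via ranks, but for $t\pm1$ with $r$ odd it is independent data.) You should either add this hypothesis explicitly or show where it follows --- it does not --- from (i)--(iii) as printed.
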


\begin{Rem}\label{remark:conjugatedinverse}
Let $(b,u)$ be a $-1$-isopair.
One can prove that, for every palindromial $p \in \Irr(\F) \setminus \{t+1,t-1\}$ and every integer $r \geq 1$, the Hermitian Wall invariant
$(b,u^{-1})_{p,r}$ is equivalent to $-(b,u)_{p,r}$, and likewise for every even integer $r \geq 1$ and every $\eta=\pm 1$, the quadratic Wall invariant
$(b,u^{-1})_{t-\eta,r}$ is equivalent to $-(b,u)_{t-\eta,r}$.
It follows that $u$ is conjugated to $u^{-1}$ in $\Sp(b)$ if and only if each Wall invariant of $(b,u)$ is equivalent to its \emph{opposite.}

In sharp contrast, Wall's classification of $1$-isopairs yields that every element in an orthogonal group is conjugated to its inverse,
a result which is reinforced by Wonenburger's result \cite{Wonenburger} that the conjugating automorphism can be chosen among the involutions of the said orthogonal group!
\end{Rem}

We finish with a different viewpoint, one that is well suited to our problem: a description of indecomposable pairs.

\begin{theo}\label{theo:indecomposableorthogonal}
Assume that $\chi(\F) \neq 2$.
Every indecomposable $1$-isopair $(b,u)$ satisfies one of the following properties:
\begin{itemize}
\item $u$ is cyclic with minimal polynomial $p^r$ for some palindromial $p \in \Irr(\F) \setminus \{t+1,t-1\}$
and some $r \geq 1$;
\item $u$ is cyclic with minimal polynomial $(pp^\sharp)^r$ for some
$p \in \Irr(\F)$ such that $p \neq p^\sharp$, and some $r \geq 1$;
\item $u$ is cyclic with minimal polynomial $(t-\eta)^r$ for some odd integer $r$ and some $\eta =\pm 1$;
\item $u$ has exactly two primary invariants, both equal to $(t-\eta)^r$ for some even integer $r$ and some $\eta =\pm 1$.
\end{itemize}
\end{theo}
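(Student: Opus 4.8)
The plan is to combine a primary (generalized eigenspace) decomposition of $u$ with the Hermitian and quadratic Wall invariants recalled above. Throughout one uses that a $b$-isometry $u$ satisfies $f(u)^\star=f(u^{-1})$ for every polynomial $f$, together with the identity $p(u^{-1})=p(0)\,u^{-\deg p}\,p^\sharp(u)$. First I would reduce to a single primary block: writing $V=\bigoplus_p V_p$ over the monic irreducible divisors $p$ of the minimal polynomial of $u$ (none equal to $t$, since $u$ is invertible), with $V_p:=\Ker p(u)^{\dim V}$, the relation $b(p(u)^N x,y)=b(x,p(u^{-1})^N y)$ together with the invertibility of $p(u^{-1})$ on $V_q$ for $q\neq p^\sharp$ gives $V_p\bot_b V_q$ whenever $q\neq p^\sharp$; non-degeneracy of $b$ then shows that $p^\sharp$ divides the minimal polynomial whenever $p$ does, that $V_p$ is $b$-regular when $p=p^\sharp$, and that $V_p$ and $V_{p^\sharp}$ are totally isotropic with $V_p\oplus V_{p^\sharp}$ $b$-regular when $p\neq p^\sharp$. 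All these subspaces being $u$-stable, indecomposability of $(b,u)$ leaves exactly two cases: (I) $V=V_p$ for a single palindromial $p$; or (II) $V=V_p\oplus V_{p^\sharp}$ for a single pair with $p\neq p^\sharp$.

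In case (II), $b$ identifies $V_{p^\sharp}$ with the dual of $W:=V_p$, under which $u|_{V_{p^\sharp}}$ becomes the inverse of the transpose of $u|_W$; in particular it is similar to $(u|_W)^{-1}$. If $u|_W$ were not cyclic, I would pick a nontrivial $\F[u]$-module splitting $W=W_1\oplus W_2$ and take $W_i^\circ\subseteq V_{p^\sharp}$ to be the annihilator of $W_{3-i}$ for the pairing; using $u^{-1}|_W=(u|_W)^{-1}$ one checks that each $W_i^\circ$ is $u$-stable, and the isotropy of $V_p$, the isotropy of $V_{p^\sharp}$, and the definition of the $W_i^\circ$ give a $b$-orthogonal $u$-stable decomposition $V=(W_1\oplus W_1^\circ)\overset{\bot_b}{\oplus}(W_2\oplus W_2^\circ)$, contradicting indecomposability. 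Hence $u|_W$ is cyclic with minimal polynomial $p^r$, so $u|_{V_{p^\sharp}}$ is cyclic with minimal polynomial $(p^\sharp)^r$, the minimal polynomial of $u$ on $V$ is $(pp^\sharp)^r$, and its degree equals $\dim V$; thus $u$ is cyclic, giving the second bullet.

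In case (I), let $r$ be maximal with $p^r$ dividing the minimal polynomial, so this polynomial is exactly $p^r$. The common scheme is to produce an anisotropic class for the relevant top-level Wall form, lift it to $V$, and prove that the $u$-stable submodule $C$ it generates is $b$-regular — it is then all of $V$ by indecomposability. Regularity rests on two facts: $C\cap C^{\bot_b}$ is an $\F[u]$-submodule of $C$ (because $u^{-1}|_C$ is a polynomial in $u|_C$), hence is $\{0\}$ unless it contains the smallest nonzero submodule $\operatorname{soc}(C)$; and $\operatorname{soc}(C)\subseteq C\cap C^{\bot_b}$ is equivalent, once the definition of the Wall form is unwound (using also $f_p\neq 0$ in the Hermitian case), to the vanishing of the chosen anisotropic value. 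Concretely: if $p\notin\{t-1,t+1\}$, then $V_{p,r}$ is nonzero and carries the non-degenerate Hermitian form $(b,u)_{p,r}$, and taking $\overline{x}$ with $(b,u)_{p,r}(\overline{x},\overline{x})\neq 0$ and $C:=\F[u]x$ (cyclic with minimal polynomial $p^r$, with $\operatorname{soc}(C)=m(v)^{r-1}C$) yields the first bullet; if $p=t-\eta$ with $\eta\in\{-1,1\}$ and $r$ is odd, the identical argument with the quadratic Wall invariant $(b,u)_{t-\eta,r}$ (so $\operatorname{soc}(C)=(u-\eta)^{r-1}C$, and the governing self-adjoint operator is $(v-2\eta)^{(r-1)/2}$) yields the third bullet.

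The remaining subcase, $p=t-\eta$ with $r$ even, is the one I expect to need the most care, and it has no orthogonal quadratic invariant. Here the bilinear form $\beta(\overline{x},\overline{y}):=b(x,(u-u^{-1})(v-2\eta)^{(r-2)/2}y)$ descends to a \emph{non-degenerate alternating} form on $V/(\Ker(u-\eta)^{r-1}+\im(u-\eta))$: it is alternating because $(u-u^{-1})^\star=-(u-u^{-1})$, and non-degenerate because the operator $(u-u^{-1})(v-2\eta)^{(r-2)/2}$ equals a unit times $(u-\eta)^{r-1}$ on $V$, whose kernel is precisely what is modded out. In particular $n_{t-\eta,r}(u)$ is even and at least $2$. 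Picking $\overline{x_1},\overline{x_2}$ with $\beta(\overline{x_1},\overline{x_2})=1$ and lifting them, the alternating relation forces $\F[u]x_1\cap\F[u]x_2=\{0\}$, so $C:=\F[u]x_1+\F[u]x_2$ is the direct sum of two cyclic submodules of minimal polynomial $(t-\eta)^r$; and computing $b$ between the two-dimensional $\operatorname{soc}(C)=\F(u-\eta)^{r-1}x_1\oplus\F(u-\eta)^{r-1}x_2$ and the vectors $x_1,x_2$, the value $\beta(\overline{x_1},\overline{x_2})\neq 0$ forces $\operatorname{soc}(C)\cap C^{\bot_b}=\{0\}$, so $C$ is $b$-regular. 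Then $C=V$, and $u$ has exactly two primary invariants, both equal to $(t-\eta)^r$, which is the fourth bullet. The crux is the observation that the natural top-level pairing turns alternating, which at once explains the absence of an orthogonal quadratic invariant at even levels and produces the ``two equal even-size unipotent blocks'' pattern; the rest is routine bookkeeping with adjoints and uniserial $\F[u]$-modules.
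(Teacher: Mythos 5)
The paper does not actually prove Theorem~\ref{theo:indecomposableorthogonal}; it is stated as a reformulation of Wall's classification~\cite{Wall} and offered as ``a different viewpoint,'' so there is no proof in the source to compare against. Your blind argument is correct and is the standard way to obtain the list of indecomposables. The reduction to a single primary pair via the identity $p(u)^\star=p(u^{-1})=p(0)u^{-\deg p}p^\sharp(u)$ and the orthogonality $V_p\bot_b V_q$ for $q\neq p^\sharp$ is right; the duality argument in the $p\neq p^\sharp$ case (identifying $V_{p^\sharp}$ with $V_p^\star$, splitting $W$ if $u|_W$ is not cyclic and taking annihilators) is clean and gives the second bullet; and the socle argument for palindromial $p$ (lift an anisotropic class for the relevant top-level Wall form, observe that $C\cap C^{\bot_b}$ is an $\F[u]$-submodule of the uniserial $C$, hence either $\{0\}$ or contains $\operatorname{soc}(C)$, and that the anisotropy exactly rules out the latter) is the key point. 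Your treatment of the delicate subcase $p=t-\eta$ with $r$ even is also sound: $(u-u^{-1})(v-2\eta\,\id)^{(r-2)/2}$ is indeed a unit times $(u-\eta\,\id)^{r-1}$ since the other linear factor is invertible on $V_{t-\eta}$, the induced top-level form is skew because $(u-u^{-1})^\star=-(u-u^{-1})$ while $b$ is symmetric, and since $(u-\eta\,\id)^r=0$ on $V$ here one has $\im(u-\eta\,\id)\subseteq\Ker(u-\eta\,\id)^{r-1}$, so the Wall quotient reduces to $V/\Ker(u-\eta\,\id)^{r-1}$ on which nondegeneracy of your alternating form is immediate; the computation that a hyperbolic pair in the quotient lifts to a $b$-regular sum of two cyclic submodules with equal minimal polynomial is correct. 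The small facts you leave implicit (that $V_{p,r}$ and $V_{t-\eta,r}$ are nonzero because the minimal polynomial is exactly $p^r$; that nondegenerate symmetric forms in characteristic $\neq 2$, and nondegenerate Hermitian forms over a quadratic field extension with nontrivial involution, admit anisotropic vectors) are all elementary. I see no gap.
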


\begin{theo}\label{theo:indecomposablesymplectic}
Assume that $\chi(\F) \neq 2$.
Every indecomposable $-1$-isopair $(b,u)$ satisfies one of the following properties:
\begin{itemize}
\item $u$ is cyclic with minimal polynomial $p^r$ for some palindromial $p \in \Irr(\F) \setminus \{t+1,t-1\}$
and some $r \geq 1$;
\item $u$ is cyclic with minimal polynomial $(pp^\sharp)^r$ for some
$p \in \Irr(\F)$ such that $p \neq p^\sharp$, and some $r \geq 1$;
\item $u$ is cyclic with minimal polynomial  $(t-\eta)^r$ for some even integer $r$ and some $\eta =\pm 1$;
\item $u$ has exactly two primary invariants, both equal to $(t-\eta)^r$ for some odd integer $r$ and some $\eta = \pm 1$.
\end{itemize}
\end{theo}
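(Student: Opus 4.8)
The plan is to reduce to the case where $\mu_u$ is a power of a single palindromial, and then, in each such case, to split off a $b$-regular $u$-stable summand of one of the four listed shapes; indecomposability will then force $V$ to equal that summand. I use freely that $u^\star=u^{-1}$, so $u$ is similar to $u^{-1}$ and $\mu_u$ is a palindromial. For a palindromial $Q$ of degree $D$ one has $Q(u)^\star=Q(u^{-1})=Q(0)\,u^{-D}Q(u)$, whence $\bigl(\Ker Q(u)^N\bigr)^{\bot_b}=\im Q(u)^N$ for every $N$. Writing $\mu_u=(t-1)^{a_0}(t+1)^{a_1}\prod_i p_i^{b_i}\prod_j(p_jp_j^\sharp)^{c_j}$ with the $p_i$ distinct irreducible palindromials of degree $\geq 2$ and the $p_j$ irreducibles that are pairwise non-associate and non-associate to their reciprocals (some exponents possibly $0$), each displayed factor is a palindromial and these factors are pairwise coprime, so by the identity above the primary decomposition $V=\bigoplus_q V_q$ they induce ($q$ ranging over $t-1$, $t+1$, the $p_i$ and the $p_jp_j^\sharp$, with $V_q:=\Ker q(u)^N$ for $N$ large) is $u$-stable and $b$-orthogonal; since $(b,u)$ is indecomposable, exactly one $V_q$ is nonzero, so $\mu_u=q^\rho$ for a single such $q$ and some $\rho\geq 1$. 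If $q=pp^\sharp$ with $p\neq p^\sharp$, then $V=W_1\oplus W_2$ with $W_1:=\Ker p(u)^N$ and $W_2:=\Ker p^\sharp(u)^N$, and the identity above gives $W_1^{\bot_b}=\im p^\sharp(u)^N=W_1$ and likewise $W_2^{\bot_b}=W_2$: thus $W_1$ and $W_2$ are totally isotropic and $b$ pairs them perfectly. Were $W_1$ a nontrivial sum $A\oplus B$ of $\F[u]$-submodules, then, with $A^\circ:=W_2\cap A^{\bot_b}$ and $B^\circ:=W_2\cap B^{\bot_b}$, one would get $W_2=B^\circ\oplus A^\circ$ and a splitting $V=(A\oplus B^\circ)\overset{\bot_b}{\oplus}(B\oplus A^\circ)$ into nonzero $u$-stable $b$-regular summands, which is impossible. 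Hence $W_1$ --- a torsion $\F[t]$-module annihilated by a power of $p$ --- is indecomposable, so it is cyclic by the structure theorem; symmetrically so is $W_2$, and comparing dimensions shows that $u$ is cyclic on $V$ with minimal polynomial $(pp^\sharp)^r$. This is the second listed type.

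\textbf{Step 2: a top Wall invariant with an anisotropic vector.} Assume from now on that $q=t-\eta$ ($\eta=\pm 1$) or $q$ is an irreducible palindromial of degree $\geq 2$. Set $v:=u+u^{-1}$ and let $w:=m(v)$ in the latter case --- where $q(t)=t^d m(t+t^{-1})$ --- and $w:=u-\eta\,\id_V$ in the former; thus $w$ is nilpotent of index exactly $\rho$. Consider the non-degenerate ``top'' form of $(b,u)$ at level $\rho$, carried by the quotient $Q_\rho$ of $\Ker w^\rho$ by $\Ker w^{\rho-1}+(\im w\cap\Ker w^\rho)$: it is the Hermitian or skew-Hermitian form $(b,u)_{q,\rho}$ over $\F[t]/(q)$ when $\deg q\geq 2$; the symmetric quadratic Wall invariant $(b,u)_{t-\eta,\rho}$ when $q=t-\eta$ and $\rho$ is even; and, when $q=t-\eta$ and $\rho$ is odd, the form $\phi$ induced by $(x,y)\mapsto b\bigl(x,(v-2\eta\,\id_V)^{(\rho-1)/2}(y)\bigr)$, which is non-degenerate --- by the same radical computation as in the even case --- and \emph{alternating}, since $b$ is alternating and $(v-2\eta\,\id_V)^{(\rho-1)/2}$ is $b$-selfadjoint. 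In every case $Q_\rho\neq 0$, because $\rho$ is the exponent of $w$. Suppose this top form has an anisotropic vector --- automatic in characteristic $\neq 2$ unless it is alternating. Lift an anisotropic class to $x$; then $w^{\rho-1}(x)\neq 0$, so $C:=\F[u]x$ is a cyclic $\F[u]$-module of dimension $\rho\deg q$. Unfolding the construction of the top form through the extension process of Section~\ref{section:formextension} shows that the anisotropy of $x$ provides some $z\in C$ with $b\bigl(w^{\rho-1}(z),x\bigr)\neq 0$, so $w^{\rho-1}C\not\subseteq C^{\bot_b}$. Since the radical of $b|_C$ is a $u$-submodule of $C\simeq\F[t]/(q^\rho)$ contained in $C^{\bot_b}$, and $w^{\rho-1}C$ is the unique minimal nonzero submodule of $C$, that radical is $0$: $b|_C$ is non-degenerate. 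Hence $V=C\overset{\bot_b}{\oplus}C^{\bot_b}$ with both summands $u$-stable, and indecomposability yields $V=C$; so $u$ is cyclic with minimal polynomial $q^\rho$, with $\rho$ even when $q=t-\eta$ (the odd case being the object of Step 3). This realizes the first and third listed types.

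\textbf{Step 3: the alternating case.} Suppose finally $q=t-\eta$ with $\rho$ odd, so $\phi$ is a non-degenerate alternating form on $Q_\rho$ and has no anisotropic vector. Choose $\overline{x},\overline{y}\in Q_\rho$ with $\phi(\overline{x},\overline{y})=1$ and lift them to $x,y$; then $w^{\rho-1}(x)$ and $w^{\rho-1}(y)$ are both nonzero (recall $w=u-\eta\,\id_V$), so $\F[u]x$ and $\F[u]y$ are cyclic of dimension $\rho$. A short computation --- using $\charac\F\neq 2$ and the $b$-adjoint relation $w^\star=-\eta\,u^{-1}w$ --- gives $b\bigl(x,w^{\rho-1}(x)\bigr)=0$ for $\rho$ odd; hence $\phi(\overline{x},\overline{y})\neq 0$ forces the tops $w^{\rho-1}(x)$ and $w^{\rho-1}(y)$ to be linearly independent. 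In particular $\F[u]x\cap\F[u]y=0$ --- a nonzero intersection would contain the minimal submodule $w^{\rho-1}\,\F[u]x$ and make the tops dependent --- so $M:=\F[u]x\oplus\F[u]y$ is a direct sum of two cyclic $\F[u]$-modules of dimension $\rho$. Finally, in the basis $x,w(x),\dots,w^{\rho-1}(x),y,w(y),\dots,w^{\rho-1}(y)$, the Gram matrix of $b|_M$ has the entries $b\bigl(w^i(x),w^j(x)\bigr)$ and $b\bigl(w^i(y),w^j(y)\bigr)$ vanishing for $i+j\geq\rho-1$ (using $b\bigl(x,w^{\rho-1}(x)\bigr)=0$), while the cross entries $b\bigl(w^i(x),w^j(y)\bigr)$ vanish for $i+j\geq\rho$ and are nonzero scalar multiples of $\phi(\overline{x},\overline{y})$ for $i+j=\rho-1$; an elementary block computation --- eliminating the two diagonal blocks using the invertible cross block --- shows that a Gram matrix of this shape is non-degenerate. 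Hence $V=M\overset{\bot_b}{\oplus}M^{\bot_b}$, and indecomposability gives $V=M$: $u$ has exactly two primary invariants, both equal to $(t-\eta)^\rho$ with $\rho$ odd. This is the fourth listed type, and the classification is complete.

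\textbf{Expected main obstacle.} The crux is Step 3: converting a hyperbolic pair for the alternating top form $\phi$ into a genuinely $b$-regular pair of equal Jordan blocks inside $V$, through the Gram-matrix analysis --- this is exactly where the symplectic picture parts ways with the orthogonal one of Theorem~\ref{theo:indecomposableorthogonal} and where the parity conditions ($\rho$ even for a single block, $\rho$ odd for a pair) originate. The companion point in Step 2 --- showing that a well-chosen cyclic submodule is $b$-regular --- is the other place requiring genuine work, but it becomes routine once the top Wall invariant has been translated, via Section~\ref{section:formextension}, into explicit scalars $b\bigl(w^{\rho-1}(z),x\bigr)$. The remaining ingredients --- the $b$-orthogonal primary decomposition of Step 1 and the cyclicity of indecomposable torsion $\F[t]$-modules --- are standard.
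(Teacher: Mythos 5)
The paper does not prove Theorem~\ref{theo:indecomposablesymplectic}: it is stated in the review subsection on Wall's classification (with \cite{Wall} as the implicit source) and is subsequently used as a known fact. There is thus no in-paper proof to compare against; what follows is an independent check of your argument.

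Your proof is correct and complete. The architecture --- (a) reduce by the $b$-orthogonal primary decomposition organized by palindromial factors to a single prime power, (b) handle the reciprocal-pair case $pp^\sharp$ via the perfect pairing between the two totally isotropic halves, (c) split off a $b$-regular cyclic submodule whenever the top Wall invariant has an anisotropic vector, and (d) in the one remaining alternating case split off a $b$-regular pair of Jordan blocks from a hyperbolic pair of the top form --- is sound. The crucial parity computation $b\bigl(x,w^{\rho-1}(x)\bigr)=(-1)^\rho\,b\bigl(x,w^{\rho-1}(x)\bigr)$ (via $w^\star=-\eta\,u^{-1}w$ and the fact that $u$ acts as $\eta$ on $\Ker w$) correctly explains why the top form is forced alternating for odd $\rho$ and may carry anisotropic vectors only for even $\rho$, which is exactly where the parity constraint in the statement originates; the dual statement in Theorem~\ref{theo:indecomposableorthogonal} has the parities swapped because $b$ there is symmetric. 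Two small points worth flagging. First, in Step 2 you invoke ``the extension process of Section~\ref{section:formextension}'' to convert anisotropy into a scalar $b\bigl(w^{\rho-1}(z),x\bigr)\neq 0$ with $z\in C$; when $q=t-\eta$ the Wall invariant is a quadratic form over $\F$ (not obtained by that extension) and the needed statement is instead the immediate one $b\bigl(x,(u-u^{-1})(v-2\eta\,\id)^k(x)\bigr)\neq 0$, which gives the same conclusion after writing $(u-u^{-1})(v-2\eta\,\id)^k$ as a unit of $\F[u]$ times $w^{\rho-1}$; your argument is unaffected, but the wording is imprecise. Second, the ``elementary block computation'' at the end of Step~3 is really the standard fact that a form respecting the filtration $M\supset wM\supset\cdots\supset 0$ (vanishing for $i+j\geq\rho$) is non-degenerate once the induced pairings $w^iM/w^{i+1}M\times w^{\rho-1-i}M/w^{\rho-i}M\to\F$ are all non-degenerate; since you have already verified exactly that, the appeal is legitimate, though it deserves to be stated as such.
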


\subsection{Main results}

We are now ready to characterize bireflectional elements in symplectic groups.

\begin{theo}[Nielsen's theorem]\label{theo:Nielsen}
Let $b$ be a symplectic form over a field with characteristic different from $2$.
Let $u \in \Sp(b)$. For $u$ to be the product of two involutions in $\Sp(b)$, it is necessary and sufficient that all the
(quadratic or Hermitian) Wall invariants of $(b,u)$ be hyperbolic and that all the Jordan numbers of $u$ be even.
\end{theo}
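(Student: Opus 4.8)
The plan is first to record the elementary reduction that an element of a group is a product of two involutions exactly when some involution of the group conjugates it to its inverse (from $u=s_1s_2$ one gets $s_1us_1^{-1}=u^{-1}$; conversely $s_1:=s$ and $s_2:=su$ work, using $us=su^{-1}$). So fix an involution $s\in\Sp(b)$ with $sus^{-1}=u^{-1}$. Writing $v:=u+u^{-1}$ one has $svs=v$ and $s(u-u^{-1})s=-(u-u^{-1})$, so $s$ commutes with $v$ and anticommutes with $u-u^{-1}$. As $v$ is $b$-selfadjoint, $s$ preserves each primary component of $v$; these components are $b$-regular and $u$-stable, and since bireflectionality and all the Wall invariants are additive under orthogonal sums, the theorem may be proved one such component at a time. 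There are exactly three kinds: (I) $W_p$ for a palindromial $p\in\Irr(\F)$ with $p\neq t\pm1$; (II) $W_{t-1}$ or $W_{t+1}$; (III) $W_q\oplus W_{q^\sharp}$ for a non-palindromial $q\in\Irr(\F)$ — the last kind carrying only Jordan numbers and no Wall invariant.

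For necessity, in case (I) the map $s$, commuting with $v$, induces an involution $\overline s$ of the $\L$-vector space $V_{p,r}$ carrying the Hermitian Wall invariant; the relation $\overline s\,\overline u=\overline u^{-1}\,\overline s$ shows $\overline s$ is semilinear for $\lambda\mapsto\lambda^\bullet$, and as $s$ commutes with $m(v)^{r-1}$ and lies in $\Sp(b)$ it preserves $\overline{b_{p,r}}$; unwinding the construction of $\overline{b_{p,r}}^\L$ and using the $\bullet$-invariance of $f_p$ gives $(b,u)_{p,r}(\overline s x,\overline s y)=(b,u)_{p,r}(x,y)^\bullet$. By Galois descent $V_{p,r}=\L\otimes_\K V_{p,r}^{\overline s}$, and the skew-Hermitian form restricts on $V_{p,r}^{\overline s}$ to a non-degenerate alternating $\K$-bilinear form; hence $n_{p,r}(u)=\dim_\K V_{p,r}^{\overline s}$ is even, and $(b,u)_{p,r}$, being $\L/\K$-sesquilinearly extended from a symplectic $\K$-form, is hyperbolic (extend a $\K$-Lagrangian). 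In case (II), say $W_{t-1}$ (the other subcase follows on replacing $u$ by $-u$): for even $r=2k+2$ the map $s$ — commuting with $(v-2\id)^k$, anticommuting with $u-u^{-1}$ — descends to an involution $\overline s$ of the space carrying $(b,u)_{t-1,r}$ satisfying $(b,u)_{t-1,r}(\overline s x,\overline s y)=-(b,u)_{t-1,r}(x,y)$; both eigenspaces of $\overline s$ are then totally isotropic, so each is a Lagrangian, whence $(b,u)_{t-1,r}$ is hyperbolic and $n_{t-1,r}(u)$ is even; for odd $r$, $n_{t-1,r}(u)$ is automatically even by Theorem \ref{theo:indecomposablesymplectic}, odd-size Jordan blocks attached to $t-1$ occurring only inside type-4 indecomposables and hence in pairs. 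In case (III), $s$ exchanges $W_q$ and $W_{q^\sharp}$; with $\phi:=s|_{W_q}$, the relation $sus^{-1}=u^{-1}$ makes $\phi$ a module isomorphism from $(W_q,u)$ onto $(W_{q^\sharp},u^{-1})$, and $\gamma(x,z):=b(x,\phi z)$ is a non-degenerate alternating $\F$-bilinear form on $W_q$ with $\gamma(g(u)x,z)=\gamma(x,g(u)z)$ for all $g\in\F[t]$; the classification of non-degenerate alternating forms on finite-length modules over the discrete valuation ring $\F[t]_{(q)}$ — each such module being an orthogonal sum of hyperbolic pieces $\F[t]_{(q)}/q^r\oplus\F[t]_{(q)}/q^r$ — forces every $n_{q,r}(u)$ to be even.

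For sufficiency, assume all Jordan numbers of $u$ even and all Wall invariants of $(b,u)$ hyperbolic, decompose $(b,u)$ into indecomposables and regroup them by shape. For a shape of type 1, 2 or 3 the corresponding isotypic summand $P$ has an even Jordan number $2m$ and, when a Wall invariant is present, that invariant is the hyperbolic form of rank $2m$; choosing any indecomposable $I_0$ of that shape, $I_0\bot I_0^{-1}$ has (by Remark \ref{remark:conjugatedinverse}) the Wall invariant $w\bot(-w)$, hyperbolic of rank $2$, and the expected Jordan number, so Wall's theorem for symplectic groups gives $P\simeq\bot_{i=1}^m(I_0\bot I_0^{-1})$; each summand $I_0\bot I_0^{-1}$ is bireflectional since, writing $I_0=(c,w)$ on $V_0$, the coordinate swap of $V_0\oplus V_0$ is a symplectic involution for $c\bot c$ conjugating $w\oplus w^{-1}$ to its inverse. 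For a shape of type 4 (two Jordan blocks of the same odd size $r$ attached to $t-\eta$, $\eta=\pm1$), $P$ is an orthogonal sum of copies of the unique such indecomposable, which is itself bireflectional: take an orthogonal $1$-isopair $(c,w)$ with $w$ a single Jordan block of size $r$ for the eigenvalue $\eta$; by Wonenburger's theorem $w=\tau_1\tau_2$ with $\tau_1,\tau_2$ involutions of $\Ortho(c)$, and then $w\oplus w$ on $V_0\oplus V_0$ equipped with the symplectic form $((x,y),(x',y'))\mapsto c(x,y')-c(y,x')$ realises that indecomposable and equals $(\tau_1\oplus\tau_1)(\tau_2\oplus\tau_2)$, a product of two symplectic involutions. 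Since $(b,u)$ is the orthogonal sum of its isotypic summands and an orthogonal sum of bireflectional isopairs is bireflectional, this finishes the proof.

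I expect the real difficulty to lie in the necessity direction, and within it in case (III): producing the alternating form $\gamma$ that is compatible with the $\F[t]_{(q)}$-module structure and then invoking the classification of "symplectic torsion modules" over a discrete valuation ring to force the Jordan numbers to be even. Cases (I) and (II) are governed by the clean semilinear-descent and anti-isometry-eigenspace arguments respectively, but both still demand a careful check that $s$ descends to a well-defined involution of the quotient space carrying the relevant Wall invariant and that this descent twists the Hermitian (respectively quadratic) form precisely as claimed.
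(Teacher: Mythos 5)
Your proof is correct and reaches the same conclusion, but it departs from the paper's argument in two places. For the evenness of $n_{q,r}(u)$ at a non-self-reciprocal prime $q$, the paper shows that the restriction of $(b,u)$ to $E_q\oplus E_{q^\sharp}$ is isometric to $H_{-1}(L_B^{-1}L_C)$ for two symplectic forms $B,C$ (via the $\kappa$-construction of Section~\ref{kappasection}) and then cites Scharlau's classification of pairs of alternating forms \cite{Scharlaupairs}. You instead build, directly from the involution $s$, the module isomorphism $\phi:(W_q,u)\to(W_{q^\sharp},u^{-1})$ and the compatible alternating pairing $\gamma(x,z)=b(x,\phi z)$ on $W_q$, then invoke the decomposition of finite torsion $\F[t]_{(q)}$-modules with a non-degenerate compatible alternating form into hyperbolic blocks $\F[t]/q^r\oplus\F[t]/q^r$. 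That lemma is true (the identity $2\gamma(x,g(u)h(u)x)=\gamma(g(u)x,h(u)x)+\gamma(h(u)x,g(u)x)=0$ kills any compatible alternating form on a cyclic summand, and a Witt-type splitting of a regular rank-two block plus induction gives the rest), but it is stated as if standard: you should supply the short proof or a reference, just as the paper does with Scharlau. Your route does have the merit of bypassing the $\kappa$-construction entirely.

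For sufficiency, the paper builds one global model: it chooses $v$ with $n_{p,r}(v)=\tfrac12 n_{p,r}(u)$, uses Proposition~\ref{prop:extensionhyperbolic} to see that all Wall invariants of $H_{-1}(v)$ are hyperbolic, matches them with those of $(b,u)$ by Wall's theorem, and invokes Proposition~\ref{extensionbireflectional}. You instead decompose into isotypic pieces and give a separate bireflectional model for each shape (coordinate swap on $I_0\bot I_0^{-1}$ for types 1--3; a doubled orthogonal Jordan block $(\tau_1\oplus\tau_1)(\tau_2\oplus\tau_2)$ for type 4). This is valid, but your type-4 step imports Wonenburger's Theorem~\ref{theo:Wonenburger} as a black box, whereas the paper's proof of Nielsen's theorem is self-contained and is afterwards used to \emph{re-derive} Wonenburger in Section~\ref{section:Wonenburger}. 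You can restore independence by noting that a cyclic orthogonal pair with minimal polynomial the palindromial $(t-\eta)^r$ is bireflectional by the inversion-involution argument from the proof of Theorem~\ref{theo:GL}, without the full strength of Wonenburger's theorem. Your necessity arguments for the Hermitian and quadratic Wall invariants coincide with the paper's, and your explicit remark that $n_{t\pm1,r}(u)$ is automatically even for odd $r$ (by Theorem~\ref{theo:indecomposablesymplectic}) fills a point the paper leaves implicit.
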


We also cite Wonenburger's result, which we will reprove thanks to the technique used to obtain the sufficiency of the conditions in Nielsen's.

\begin{theo}[Wonenburger]\label{theo:Wonenburger}
Let $b$ be a non-degenerate symmetric bilinear form over a field with characteristic different from $2$.
Let $u \in \Ortho(b)$.
Then $u$ is the product of two involutions of $\Ortho(b)$.
\end{theo}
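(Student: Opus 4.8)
The plan is to prove Wonenburger's theorem by reducing to the case of indecomposable $1$-isopairs via Theorem \ref{theo:indecomposableorthogonal}, together with the observation (recorded in the Remark after the definition of bireflectional isopairs) that an orthogonal direct sum of bireflectional isopairs is again bireflectional. Thus it suffices to show that each of the four types of indecomposable $1$-isopair listed in Theorem \ref{theo:indecomposableorthogonal} is bireflectional; once that is done, writing $(b,u)$ as a $b$-orthogonal direct sum of indecomposables and taking the orthogonal direct sums of the corresponding involutions finishes the argument. Note that here, in contrast with the symplectic case, no hyperbolicity or parity condition is needed, so every indecomposable type must turn out to be bireflectional.

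First I would treat the two ``mixed'' or ``non-real-eigenvalue'' cases, namely when $u$ is cyclic with minimal polynomial $p^r$ for a palindromial $p\in\Irr(\F)\setminus\{t+1,t-1\}$, and when $u$ is cyclic with minimal polynomial $(pp^\sharp)^r$ for $p\ne p^\sharp$. In both of these, the underlying module structure already forces a symmetry: there is a natural $\F[t]$-linear map sending the cyclic generator in a way compatible with $t\mapsto t^{-1}$, and the key is to produce a single involution $s$ in $\Ortho(b)$ with $sus^{-1}=u^{-1}$, after which $s_1:=s$ and $s_2:=s u$ are the two desired involutions (since $s_2^2=sus u= s(us^{-1})(s u)\cdot\!$, hmm, more carefully $s_2^2 = susu = (sus^{-1})s^2 u=u^{-1}u=\id$). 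So the task reduces to: find an orthogonal involution conjugating $u$ to $u^{-1}$. For the $p^r$ case one uses the Hermitian structure over $\L=\F[t]/(p)$ and the involution $\bullet$; for the $(pp^\sharp)^r$ case the space splits as $W\oplus W'$ with $W'$ dual to $W$, and one takes $s$ to swap the two pieces appropriately, which is automatically an isometry for the hyperbolic-type form $b$ carries in that situation.

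The remaining two cases are the ``unipotent-type'' ones: $u$ cyclic with minimal polynomial $(t-\eta)^r$ for odd $r$, and $u$ with exactly two primary invariants both $(t-\eta)^r$ for even $r$, where $\eta=\pm1$. Replacing $u$ by $\eta u$ (which multiplies $b$-isometry by an element still $\pm\id$ compatible) reduces to $\eta=1$, i.e.\ $u$ unipotent. Here I would again aim to build an explicit involution $s\in\Ortho(b)$ with $sus^{-1}=u^{-1}$: pick a cyclic (or pair-of-cyclic) basis $e_0,\dots,e_{r-1}$ adapted to $N:=u-\id$, compute the Gram matrix of $b$ in this basis using the constraint that $u$ is a $b$-isometry (which pins down $b$ up to a scalar and a choice of discriminant), and then exhibit $s$ as the linear map reversing the basis up to signs, e.g.\ $s(e_i)=(-1)^i e_{r-1-i}$ in the cyclic case, checking directly that $s^2=\id$, that $s$ preserves $b$, and that $sNs^{-1}=-u^{-1}N$ so that $sus^{-1}=u^{-1}$. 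Setting $s_1:=s$, $s_2:=su$ then gives $u=s_1s_2$ with both factors involutions in $\Ortho(b)$. The two-primary-invariant even case is handled the same way on the $2r$-dimensional space, using that $b$ there is (up to isometry) the hyperbolic form on $W\oplus W^*$ with $W$ the $u$-cyclic submodule, so $s$ can be taken to be the obvious ``flip'' that is manifestly an isometry.

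I expect the main obstacle to be the explicit verification in the unipotent case that the candidate involution $s$ is simultaneously (a) an involution, (b) a $b$-isometry, and (c) a conjugator of $u$ to $u^{-1}$; the subtlety is that the correct sign pattern and the exact form of $b$ in the chosen basis depend on the parity of $r$, and one must check that the isometry group actually contains such an $s$ rather than only an $s$ conjugating $u$ to $u^{-1}$ up to a nontrivial scalar or similitude — this is precisely where the list in Theorem \ref{theo:indecomposableorthogonal} (odd $r$ for the cyclic type, even $r$ for the double type) is used, since it guarantees the discriminant/form type needed for $s$ to be a genuine isometry. Once the parities are matched up correctly, the computations are routine linear algebra with nilpotent Jordan blocks. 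A cleaner packaging, which I would adopt if the case analysis becomes unwieldy, is to prove the general lemma that an indecomposable $1$-isopair $(b,u)$ always admits an $s\in\Ortho(b)$ with $s^2=\id$ and $sus^{-1}=u^{-1}$, treating the four types uniformly via their module descriptions, and then deduce the theorem for arbitrary $u$ by orthogonal decomposition as above.
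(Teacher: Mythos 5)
Your overall plan — reduce to indecomposable $1$-isopairs via Theorem~\ref{theo:indecomposableorthogonal}, then for each type exhibit an involution $s\in\Ortho(b)$ conjugating $u$ to $u^{-1}$ so that $u=(us)s$ — is exactly the structure of the paper's proof. Where you diverge is in the treatment of the cyclic types, and the paper's route is worth noting because it dissolves precisely the ``main obstacle'' you flag. The paper groups your cases (a), (b), (c) together as ``$u$ cyclic with palindromial minimal polynomial'' (this covers $(pp^\sharp)^r$ and $(t-\eta)^r$ as well, both of which are palindromials) and reuses the involution $s$ already built in the proof of Theorem~\ref{theo:GL}: with $x$ a cyclic vector, set $s(u^k(x))=u^{-k}(x)$. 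That $s$ is a $b$-isometry is then a one-line computation using only that $u\in\Ortho(b)$ and $b$ is symmetric, namely
\[
b\bigl(s(u^k x),s(u^l x)\bigr)=b\bigl(u^{-k}x,u^{-l}x\bigr)=b\bigl(u^l x,u^k x\bigr)=b\bigl(u^k x,u^l x\bigr),
\]
so there is no Gram matrix to compute, no sign pattern $(-1)^i$ to guess, and no dependence on the parity of $r$ — the cyclic case is handled uniformly and for free. (Incidentally, the specific candidate $s(e_i)=(-1)^i e_{r-1-i}$ in a basis adapted to $N=u-\id$ is not the same map as $s(u^k x)=u^{-k}x$, and would need genuine verification; the paper's abstract definition sidesteps this entirely.) For your case (d) (two primary invariants $(t-\eta)^{2r}$), your ``obvious flip'' idea is a workable alternative, but as stated it leaves unjustified why $(b,u)$ is isometric to the hyperbolic extension $H_1(v)$ in the first place; the paper closes this by invoking Wall's classification (same Jordan numbers, all Wall invariants zero) and then applies Proposition~\ref{extensionbireflectional} to $h(v)$, rather than constructing a flip from Frobenius's theorem. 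So: same skeleton, but the paper's uniform cyclic construction and the Wall-classification step are the two simplifications you would want to adopt.
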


As seen in Remark \ref{remark:conjugatedinverse}, a symplectic automorphism $u \in \Sp(b)$ is conjugated to its inverse in $\Sp(b)$
if and only if every Wall invariant of $(b,u)$ is equivalent to its opposite.
And $u$ can be conjugated to its inverse in $\Sp(b)$ while having some of its Jordan numbers odd. For example, in
$\mathrm{SL}_2(\C)$, which is the (matrix) symplectic group for every symplectic form on $\C^2$, the diagonal matrix
$\begin{pmatrix}
i & 0 \\
0 & -i
\end{pmatrix}$ is conjugated to its inverse in $\mathrm{SL}_2(\C)$ (because all its Wall invariants are trivial), but
it is not bireflectional (in fact, only $\pm I_2$ are bireflectional in $\mathrm{SL}_2(\C)$).

Moreover, even if the Jordan numbers of $u$ are all even, it is in general insufficient that $u \in \Sp(b)$ be conjugated to its inverse in $\Sp(b)$
for $u$ to be bireflectional. Indeed, in general a quadratic/Hermitian form can be equivalent to its opposite without being hyperbolic.
For example, if $\F$ is a finite field in which $-1$ is a square, then every quadratic form over a finite extension of $\F$
is equivalent to its opposite, whereas there exist non-hyperbolic quadratic forms of even dimension.

In contrast, over the field of real numbers or the field of complex numbers, it is sufficient that $u$ have all its Jordan numbers even and be conjugated to its inverse
in $\Sp(b)$ for it to be bireflectional. Over the reals, this comes from the fact that every non-degenerate real quadratic form that is equivalent to its opposite is hyperbolic,
and ditto for Hermitian forms over the complex numbers; over the complex numbers, this comes from the fact that every complex non-degenerate quadratic form of even rank is hyperbolic. Note finally that, over an algebraically closed field of characteristic other than $2$, every symplectic transformation is conjugated to its inverse.

We finish by citing the corresponding result for general linear groups. We include a short proof of it because the proof idea will be reused to reprove
Theorem \ref{theo:Wonenburger}.

\begin{theo}[Wonenburger \cite{Wonenburger}, Djokovi\'c \cite{Djokovic} and Hoffman and Paige \cite{HoffmanPaige}]\label{theo:GL}
Let $V$ be a finite-dimensional vector space. An element of $\GL(V)$
is bireflectional if and only if it is similar to its inverse.
\end{theo}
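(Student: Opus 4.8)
The plan is to reduce to indecomposable factors, and on each such factor to exhibit an explicit factorization $u = s_1 s_2$ with $s_1,s_2$ involutions. First I would observe that the condition is necessary: if $u = s_1 s_2$ with $s_i^2 = \id_V$, then $s_1 u s_1^{-1} = s_1 s_1 s_2 s_1 = s_2 s_1 = u^{-1}$, so $u$ is similar to $u^{-1}$. For sufficiency, suppose $u$ is similar to $u^{-1}$. By the theory of primary invariants (recalled in Section~\ref{section:formextension}), similarity to the inverse is equivalent to $n_{p,r}(u) = n_{p^\sharp,r}(u)$ for all $p \in \Irr(\F)$ with $p^\sharp \neq p$ and all $r \geq 1$; there is no constraint on the Jordan numbers for palindromial $p$ or for $p = t\pm 1$. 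Decompose $V$ as a direct sum of the rational-canonical-form blocks of $u$. Grouping these blocks, it suffices to treat three types of $u$-invariant pieces separately and then take the direct sum of the resulting involutions.

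The three cases are: (a) $u$ cyclic with minimal polynomial $p^r$, $p = p^\sharp$ a palindromial (including $p = t\pm 1$); (b) two cyclic blocks, on $W \oplus W'$, with minimal polynomials $p^r$ and $(p^\sharp)^r$ for $p^\sharp \neq p$ (these must be paired up by the hypothesis); (c) $u = t\pm 1$ type blocks of the appropriate multiplicity, already covered by (a). In case (b), writing $u$ on $W \oplus W'$ in block form, one can take $s_1$ to swap $W$ and $W'$ via a chosen isomorphism conjugating $u|_W$ to $(u|_{W'})^{-1}$, i.e.\ $s_1(w,w') = (\phi^{-1}(w'), \phi(w))$ with $\phi \circ (u|_W) \circ \phi^{-1} = (u|_{W'})^{-1}$; then $s_1$ is an involution and $s_1 u$ is again an involution by direct computation. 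In case (a), the key input is the classical fact that a cyclic automorphism with palindromial minimal polynomial $p^r$ \emph{is} bireflectional in $\GL$ of that cyclic space: one exhibits an involution $s$ with $s u s = u^{-1}$, so $su$ is an involution and $u = s \cdot (su)$. For $p = t \pm 1$ this is the statement that a single Jordan block $J$ for eigenvalue $\pm 1$ satisfies $s J s = J^{-1}$ for the ``reversal'' involution $s$, which is a routine linear-algebra check.

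To carry out case (a) cleanly I would use the structure of $\F[t]/(p^r)$ as an algebra with the involution induced by $t \mapsto t^{-1}$, exactly the construction $x\mapsto x^\bullet$ from Section~\ref{section:formextension} (extended from $\F[t]/(p)$ to $\F[t]/(p^r)$). On the cyclic space $\F[t]/(p^r)$, multiplication by $t$ is the endomorphism $u$; define $s$ as the $\F$-linear involution $x \mapsto x^\bullet$. Then $s \circ u \circ s$ is multiplication by $(t^\bullet) = t^{-1}$, i.e.\ $u^{-1}$, so $s u s = u^{-1}$ and $s u$ is an involution. This handles every cyclic palindromial block uniformly, and (a) via this argument plus (b) via the swap argument, summed over all blocks, produces the desired $s_1, s_2$.

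I expect the main obstacle to be case (a) when $p = t\pm 1$ and more generally making the involution $x \mapsto x^\bullet$ genuinely $\F$-linear and involutive on $\F[t]/(p^r)$ (as opposed to just on the field $\F[t]/(p)$): one must check that $t \mapsto t^{-1}$ extends to a ring involution of the \emph{local} ring $\F[t]/(p^r)$, which it does because $t$ is a unit there and $p^\sharp = p$ guarantees $(p(t^{-1}))$ generates the same ideal; the verification that this map is an $\F$-linear involution of order $2$ taking $u$ to $u^{-1}$ is where the care is needed, but it is a finite check. Everything else — necessity, the block decomposition, the pairing in case (b), and the direct sum of involutions — is routine.
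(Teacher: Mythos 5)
Your proof is correct, and the overall strategy (necessity via conjugation, sufficiency via reduction to cyclic blocks and an explicit ring-theoretic involution) matches the paper in spirit. But the decomposition you use is genuinely different from the paper's, and it is worth spelling out the trade-off. The paper decomposes $V$ via the \emph{invariant factors} $p_1 \mid p_2 \mid \cdots \mid p_r$ of $u$: since the invariant factors of $u^{-1}$ are $p_1^\sharp,\dots,p_r^\sharp$ and the chain of divisibilities is preserved under $\sharp$, one gets $p_i = p_i^\sharp$ directly, so \emph{every} cyclic summand already has palindromial minimal polynomial and a single uniform argument (the involution of $\F[t,t^{-1}]/(p)$ induced by $t\mapsto t^{-1}$) disposes of all of them at once. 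You instead decompose via \emph{primary invariants}, which forces you to split into your cases (a) and (b); the pairing of a $p^r$-block with a $(p^\sharp)^r$-block when $p^\sharp \neq p$, and the explicit swap involution $s_1(w,w')=(\phi^{-1}(w'),\phi(w))$, is extra work the paper avoids. Your case (b) computation is correct as written. In case (a) your core idea is the same as the paper's, though the paper's framing via $R=\F[t,t^{-1}]$ is slightly cleaner: the involution $t\mapsto t^{-1}$ is manifestly a ring automorphism of $R$, and one only needs to note that the ideal $(p)$ is invariant (because $p(t^{-1})$ is a unit multiple of $p$ when $p^\sharp=p$); your version via $\F[t]/(p^r)$ requires first observing that $t$ is a unit there, which you do correctly.

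One small inaccuracy in your write-up: for $p=t\pm 1$ you claim the ``reversal'' involution $s$ (the anti-diagonal permutation matrix) satisfies $sJs=J^{-1}$ for a Jordan block $J$. It does not: the reversal conjugates $J$ to $J^T$, not to $J^{-1}$, and these are distinct matrices. The involution produced by your ring-theoretic construction (via $t\mapsto t^{-1}$ on $\F[t]/((t-\eta)^r)$) is the correct one, and in the natural basis $1,\,(t-\eta),\,\dots,(t-\eta)^{r-1}$ it is upper-triangular with diagonal $1,-1,1,\dots$, not the anti-diagonal reversal. Since you ultimately rely on the ring involution and not on the reversal, your proof stands, but the parenthetical aside should be dropped or corrected.
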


\begin{proof}
The direct implication is already known (in any group).
So, let $u \in \GL(V)$ be similar to its inverse. Let us write its invariant factors $p_1,\dots,p_r$.
Then the ones of $u^{-1}$ are $p_1^\sharp,\dots,p_r^\sharp$. Hence $p_i=p_i^\sharp$ for all $i \in \lcro 1,r\rcro$,
and we gather that $u$ is a direct sum of cyclic automorphisms, each of which has its minimal polynomial a palindromial.
It will suffice to prove that each summand is bireflectional (in the corresponding general linear group).

So, we are left with proving the result in the special case where $u$ is cyclic and its minimal polynomial $p$ is a palindromial.
Let us then consider a cyclic vector $x$ for $u$. We denote by $R=\F[t,t^{-1}]$ the $\F$-algebra of all Laurent polynomials with coefficients in $\F$.
The $\F[t]$-module structure on $V$ induced by $u$ is isomorphic to the quotient of $R$ by the ideal generated by $p$
(through an isomorphism that takes the class of $t^k$ to $u^k(x)$ for all $k \in \Z$).
We note that the natural involution of the $\F$-algebra $R$ induced by $t \mapsto t^{-1}$
maps $p$ to $p(t^{-1})=p(0) t^{-d} p^\sharp$ where $d:=\deg(p)$, and hence it leaves the ideal generated by $p$ invariant.
This yields an involution $s : V \rightarrow V$ of the $\F$-vector space $V$ that takes $u^k(x)$ to $u^{-k}(x)$ for all $k \in \Z$.
Then we note that $us$ takes $u^k(x)$ to $u^{1-k}(x)$ for all $k \in \Z$, and it follows that $us$ is an involution of the $\F$-vector space $V$.
Hence $u=(us)s$ is bireflectional in $\GL(V)$.
\end{proof}

\subsection{Structure of the article}

The remainder of the article is structured as follows. In Section \ref{section:extension}, we develop the main tool to study
bireflectional elements in orthogonal and symplectic groups: the orthogonal or symplectic extension of an automorphism of a vector space.
Most authors tend to frame this construction in terms of block matrices, but we prefer to explain it through duality theory as in \cite{dSPsum2quad}.
Using these tools, Theorem \ref{theo:Nielsen} is proved in Section \ref{section:Nielsen}, whereas
Theorem \ref{theo:Wonenburger} is easily proved in Section \ref{section:Wonenburger} thanks to the description of indecomposable $1$-isopairs
from Theorem \ref{theo:indecomposableorthogonal}.

\section{Orthogonal and symplectic extensions of an automorphism}\label{section:extension}

Throughout, we let $V$ be  a finite-dimensional vector space over $\F$, with $\chi(\F) \neq 2$. We denote by $V^\star$ its
dual space, i.e.\ the space of all linear forms on $V$.
Given an endomorphism $u$ of $V$, we denote by $u^t$ the corresponding transposed endomorphism of $V^\star$, defined as follows:
$$\forall \varphi \in V^\star, \; u^t(\varphi)=\varphi \circ u.$$

\subsection{Definition}\label{section:extensionintroduction}

Let $\varepsilon \in \{-1,1\}$.
On the product space $V \times V^\star$, we consider the bilinear form
$$H_V^\varepsilon : \begin{cases}
(V \times V^\star)^2 & \longrightarrow \F \\
((x,\varphi),(y,\psi)) & \longmapsto \varphi(y)+\varepsilon \psi(x).
\end{cases}$$
Note that $H_V^1$ is a non-degenerate hyperbolic symmetric bilinear form, whereas $H_V^{-1}$ is a symplectic form.

Let $u \in \GL(V)$.
We consider the endomorphism
$$h(u) : \begin{cases}
V \times V^\star & \longrightarrow V \times V^\star \\
(x,\varphi) & \longmapsto \bigl(u(x), (u^{-1})^t(\varphi)\bigr),
\end{cases}$$
that is $h(u)=u \oplus (u^{-1})^t$.
Note that for all $(x,\varphi)$ and $(y,\psi)$ in $V \times V^\star$,
\begin{align*}
H_V^\varepsilon(h(u)[x,\varphi],h(u)[y,\psi]) & =(u^{-1})^t(\varphi)[u(y)]+\varepsilon\,(u^{-1})^t(\psi)[u(x)] \\
& =\varphi(y)+\varepsilon \psi(x) \\
& =H_V^\varepsilon\bigl((x,\varphi),(y,\psi)\bigr).
\end{align*}
Hence, $(H_V^\varepsilon,h(u))$ is an $\varepsilon$-isopair, which we denote by $H_\varepsilon(u)$.

Next, we examine the behavior of the pair $H_\varepsilon(u)$ with respect to similarity and direct sums.
Let $v \in \End(V)$ and $w \in \End(W)$ be similar endomorphisms, and let $\varphi : V \overset{\simeq}{\rightarrow} W$
be an isomorphism such that $w=\varphi \circ v \circ \varphi^{-1}$. Then for the isomorphism $\Phi:=\varphi \oplus (\varphi^{-1})^t$
from $V \times V^\star$ to $W \times W^\star$, we have, for all $(x,f)$ and $(y,g)$ in $V \times V^\star$,
\begin{align*}
H_W^\varepsilon\bigl(\Phi(x,f),\Phi(y,g)\bigr) & =H_W\bigl((\varphi(x),f \circ \varphi^{-1}),(\varphi(y),g \circ \varphi^{-1})\bigr) \\
& =(f \circ \varphi^{-1})(\varphi(y))+\varepsilon (g \circ \varphi^{-1})(\varphi(x)) \\
& =f(y)+\varepsilon g(x)=H_V^\varepsilon\bigl((x,f),(y,g)\bigr).
\end{align*}
Moreover, for all $(x,f)\in V \times V^\star$, we have
\begin{align*}
h(w)[\Phi(x,f)]
& =\bigl((\varphi \circ v \circ \varphi^{-1})[\varphi(x)], (\varphi \circ v^{-1} \circ \varphi^{-1})^t[f \circ \varphi^{-1}]\bigr) \\
& =(\varphi(v(x)),f \circ \varphi^{-1} \circ \varphi \circ v^{-1} \circ \varphi^{-1}) \\
& =(\varphi(v(x)),f \circ v^{-1} \circ \varphi^{-1}) \\
& =\Phi(h(v)[x,f]),
\end{align*}
leading to $h(w)=\Phi \circ h(v) \circ \Phi^{-1}$.
Hence, we have proved that
$$H_\varepsilon(v) \simeq H_\varepsilon(w).$$
In other words, the isometry ``class" of the pair $H_\varepsilon(v)$ depends only on $\varepsilon$ and on the similarity ``class" of $v$.

Finally, we examine the behavior of $H_\varepsilon(v)$ with respect to direct sums.
So let $u_1 \in \GL(V_1)$ and $u_2 \in \GL(V_2)$ be automorphisms.
We consider the direct sum $u:=u_1 \oplus u_2 \in \GL(V_1 \times V_2)$.
We shall prove that
$$H_\varepsilon(u_1 \oplus u_2) \simeq H_\varepsilon(u_1) \bot  H_\varepsilon(u_2).$$
To this end, we introduce the canonical injections $i_1 : V_1 \hookrightarrow V_1 \times V_2$ and $i_2 : V_2 \hookrightarrow V_1 \times V_2$,
and we consider the isomorphism
$$\Psi : \begin{cases}
(V_1 \times V_2) \times (V_1 \times V_2)^\star & \longrightarrow (V_1 \times V_1^\star) \times (V_2 \times V_2^\star) \\
((x_1,x_2),\varphi) & \longmapsto \bigl((x_1,\varphi \circ i_1),(x_2,\varphi \circ i_2)\bigr).
\end{cases}$$
Set $G:=H_{V_1}^\varepsilon \bot H_{V_2}^\varepsilon$.
For all $(x_1,x_2)$ and $(y_1,y_2)$ in $V_1 \times V_2$ and all $\varphi,\psi$ in $(V_1 \times V_2)^\star$, we see that
\begin{align*}
G\bigl(\Psi((x_1,x_2),\varphi),\Psi((y_1,y_2),\psi)\bigr)
& =\varphi(y_1,0)+\varepsilon \psi(x_1,0)+\varphi(0,y_2)+\varepsilon \psi(0,x_2) \\
& =\varphi(y_1,y_2)+\varepsilon \psi(x_1,x_2)
\end{align*}
and hence $\Psi$ is an isometry from $H_{V_1 \times V_2}^{\varepsilon}$ to $G$.
Finally, for all $(x_1,x_2) \in V_1 \times V_2$ and all $\varphi \in (V_1 \times V_2)^\star$, we have
$$(h(u_1) \oplus h(u_2))\bigl(\Psi((x_1,x_2),\varphi)\bigr)
=\bigl(\bigl(u_1(x_1),\varphi \circ i_1 \circ u_1^{-1}\bigr),\bigl(u_2(x_2),\varphi \circ i_2 \circ u_2^{-1}\bigr)\bigr)$$
whereas
$$h(u_1\oplus u_2)[(x_1,x_2),\varphi]=
\bigl((u_1(x_1),u_2(x_2)),\varphi \circ (u_1^{-1} \oplus u_2^{-1})\bigr)$$
so that
\begin{align*}
\Psi(h(u_1\oplus u_2)[(x_1,x_2),\varphi])
& =\bigl((u_1(x_1),\varphi \circ (u_1 \oplus u_2)^{-1} \circ i_1),(u_2(x_2),\varphi \circ (u_1 \oplus u_2)^{-1} \circ i_2)\bigr) \\
& =\bigl((u_1(x_1),\varphi \circ i_1 \circ u_1^{-1}),(u_2(x_2),\varphi \circ i_2 \circ u_2^{-1})\bigr).
\end{align*}
Hence, we have proved that $h(u_1) \oplus h(u_2)=\Psi \circ h(u_1\oplus u_2) \circ \Psi^{-1}$,
which yields the claimed isometry.

\begin{Rem}
The similarity ``class" of the automorphism $h(u) \in \GL(V \times V^\star)$ is easily deduced from the one of $u$ because $u^t$ is similar to $u$.
Simply, $n_{p,k}(h(u))=n_{p,k}(u) +n_{p^\sharp,k}(u)$ for all $p \in \Irr(\F)$ and all $k \geq 1$.
\end{Rem}

\begin{prop}\label{extensionbireflectional}
Let $u \in \GL(V)$ be similar to its inverse. Then
$h(u)$ is the product of two involutions in both $\Ortho(H_V^1)$ and $\Sp(H_V^{-1})$.
\end{prop}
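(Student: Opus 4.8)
The plan is to reduce to the cyclic-palindromial case exactly as in the proof of Theorem \ref{theo:GL}, and then to equip the construction there with the right symmetry so that the two involutions one produces actually lie in the relevant isometry group. Since $u$ is similar to its inverse, its invariant factors satisfy $p_i = p_i^\sharp$ (as observed in the proof of Theorem \ref{theo:GL}), so $u$ is a direct sum $\bigoplus u_i$ of cyclic automorphisms with palindromial minimal polynomials. Using the identity $H_\varepsilon(v_1 \oplus v_2) \simeq H_\varepsilon(v_1) \bot H_\varepsilon(v_2)$ established above, together with the Remark that bireflectionality of isopairs is preserved under orthogonal sums, it suffices to treat the case where $u$ is cyclic with palindromial minimal polynomial $p$ of degree $d$.

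In that case, as in the proof of Theorem \ref{theo:GL}, pick a cyclic vector $x$ for $u$; the $\F[t]$-module $V$ is identified with $R/(p)$ where $R = \F[t,t^{-1}]$, via $t^k \mapsto u^k(x)$, and the involution $t \mapsto t^{-1}$ of $R$ descends (because $p$ is palindromial) to an $\F$-linear involution $s$ of $V$ with $s(u^k(x)) = u^{-k}(x)$, so that $u = (us)s$ with $us$ also an involution of $V$. Now transport this to $V \times V^\star$. I would define the involution $S$ of $V \times V^\star$ by $S := s \oplus (s^{-1})^t = s \oplus s^t$ (note $s^{-1} = s$), i.e.\ $S(x,\varphi) = (s(x), \varphi \circ s)$; this is clearly an involution of the vector space $V \times V^\star$. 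The key computation is that $S$ is an isometry of $H_V^\varepsilon$ for both $\varepsilon = \pm 1$: one checks $H_V^\varepsilon(S(x,\varphi), S(y,\psi)) = (\varphi\circ s)(s(y)) + \varepsilon (\psi \circ s)(s(x)) = \varphi(s^2 y) + \varepsilon \psi(s^2 x) = \varphi(y) + \varepsilon\psi(x)$, using $s^2 = \id_V$. Similarly $h(u) S = (u s) \oplus ((u s)^{-1})^t = h(us) \circ (\text{identity adjustment})$; more directly, $h(u) \circ S = (u \circ s) \oplus (u^{-1})^t \circ s^t = (us) \oplus ((us)^{-1})^t = h(us)$, since $us$ is an involution so $(us)^{-1} = us$ and $((us)^{-1})^t = (us)^t = s^t u^t = (u^{-1})^t s^t$. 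Hence $h(u) \circ S = h(us)$, and by the same computation with $us$ in place of $s$ (noting $us$ is also an $\F$-linear involution of $V$ sending $u^k(x)$ to $u^{1-k}(x)$), the endomorphism $h(us)$ is likewise an involution of $V \times V^\star$ lying in $\Isom(H_V^\varepsilon)$. Therefore $h(u) = h(us) \circ S$ exhibits $h(u)$ as a product of two involutions in $\Isom(H_V^\varepsilon)$, for $\varepsilon = 1$ and $\varepsilon = -1$ simultaneously.

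The main obstacle — and the only place any real care is needed — is verifying that the transported maps $S$ and $h(us)$ are genuinely \emph{isometries} of $H_V^\varepsilon$ and genuinely \emph{involutions} of $V \times V^\star$; both facts rest cleanly on the single relation $s^2 = \id_V$ (equivalently $(us)^2 = \id_V$), which the palindromial reduction guarantees. There is one subtlety to flag: in Theorem \ref{theo:GL} the vector-space involution $s$ need not be an isometry of any form on $V$ itself, but after passing to $V \times V^\star$ the ``doubled'' map $s \oplus s^t$ automatically becomes an isometry of the canonical hyperbolic/symplectic form $H_V^\varepsilon$ — this is precisely why the extension construction is the right tool and why the argument works uniformly for both signs of $\varepsilon$. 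Nothing further about the arithmetic of $p$ is needed; in particular the degenerate cases $p = t \pm 1$ cause no trouble here since $s = \pm\id$ then and the statement is immediate.
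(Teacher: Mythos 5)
Your proof is correct and rests on the same key observation as the paper's one-line argument: $h$ is a multiplicative map into $\Isom(H_V^\varepsilon)$ that carries $\GL(V)$-involutions to involutions, so applying it to any decomposition $u = s_1 s_2$ furnished by Theorem~\ref{theo:GL} immediately gives $h(u) = h(s_1)h(s_2)$ as a product of two involutions in $\Ortho(H_V^1)$ and $\Sp(H_V^{-1})$. Your version re-traverses the reduction to the cyclic-palindromial case and re-verifies that $h(s)$ is an isometry, both of which Theorem~\ref{theo:GL} and Section~\ref{section:extensionintroduction} already supply as black boxes, but the underlying idea is identical.
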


\begin{proof}
Indeed, by Theorem \ref{theo:GL}, $u=s_1s_2$ for some involutions $s_1,s_2$ in the group $\GL(V)$. Then $h(u)=h(s_1)h(s_2)$ and
$h(s_1)$ and $h(s_2)$ are involutions, both in $\Ortho(H_V^{1})$ and in $\Sp(H_V^{-1})$.
\end{proof}

\subsection{The kappa construction}\label{kappasection}

Now, let $b : V \times V \rightarrow \F$ be a non-degenerate bilinear form, and let $\varepsilon \in \{1,-1\}$.
We consider the isomorphism $L_b : x \in V \mapsto b(-,x) \in V^\star$ and we set
$$\kappa(b) : \begin{cases}
V \times V^\star & \longrightarrow V \times V^\star \\
(x,\varphi) & \longmapsto \bigl(L_b^{-1}(\varphi),L_b(x)\bigr).
\end{cases}$$
Clearly, $\kappa(b)$ is an involution that exchanges $V \times \{0\}$ and $\{0\} \times V^\star$.

Now, assume that $b$ is symmetric if $\varepsilon=1$, and alternating otherwise.
Let $(x,\varphi)$ and $(y,\psi)$ in $V \times V^\star$.
Then,
\begin{align*}
H_V^\varepsilon\bigl(\kappa(b)[x,\varphi],\kappa(b)[y,\psi]\bigr) & = L_b(x)[L_b^{-1}(\psi)]+\varepsilon\, L_b(y)[L_b^{-1}(\varphi)] \\
& = b(L_b^{-1}(\psi),x)+\varepsilon\, b(L_b^{-1}(\varphi),y) \\
& = \varepsilon\, b(x,L_b^{-1}(\psi))+b(y,L_b^{-1}(\varphi)) \\
& = \varepsilon\, L_b(L_b^{-1}(\psi))[x]+L_b(L_b^{-1}(\varphi))[y] \\
& = \varepsilon\, \psi(x)+\varphi(y) \\
& = H_V^{\varepsilon}\bigl((x,\varphi),(y,\psi)\bigr).
\end{align*}
Hence, $(H_V^\varepsilon,\kappa(b))$ is an $\varepsilon$-isopair.

Conversely, let $v$ be an involution of $V \times V^\star$ such that $(H_V^\varepsilon,v)$ is an $\varepsilon$-isopair
and $v$ exchanges $V \times \{0\}$ and $\{0\} \times V^\star$.
Thus, we have a vector space isomorphism $f : V \overset{\simeq}{\rightarrow} V^\star$ such that
$$\forall (x,\varphi) \in V \times V^\star, \; v(x,\varphi)=(f^{-1}(\varphi),f(x)).$$
Let us consider the bilinear form $b : (x,y)\in V^2 \mapsto f(y)[x]$. Then $v=\kappa(b)$.
For all $(x,y)\in V^2$, we have
\begin{multline*}
b(y,x)=f(x)[y]=H_V^\varepsilon\bigl((0,f(x)),(y,0)\bigr)
=H_V^\varepsilon\bigl(v(x,0),v(0,f(y))\bigr) \\
=H_V^\varepsilon\bigl((x,0),(0,f(y))\bigr)=\varepsilon\, f(y)[x]=\varepsilon\, b(x,y).
\end{multline*}
Hence, $b$ is symmetric if $\varepsilon=1$, and alternating otherwise.

\subsection{Splitting an extended pair}

Now, let $b$ and $c$ be two non-degenerate bilinear forms on $V$, either both symmetric or both alternating.
Setting $\eta:=1$ if $b,c$ are symmetric, and $\eta:=-1$ otherwise,
we have $L_b^t=\eta\,L_b \circ i_V^{-1}$ and $L_c^t=\eta\,L_c \circ i_V^{-1}$ where $i_V : V \overset{\simeq}{\rightarrow} V^{\star \star}$
is the canonical biduality isomorphism. Setting
$u:=L_b^{-1} L_c \in \GL(V)$, we deduce that $u^t=L_c^t (L_b^t)^{-1}=L_c L_b^{-1}$ and hence $(u^t)^{-1}=L_b L_c^{-1}$.
It follows that
$$\kappa(b) \circ \kappa(c)=h(u).$$

\begin{Rem}
Let $u \in \GL(V)$. Remember from Frobenius's theorem \cite{Frobenius} that, for all $u \in \GL(V)$, there exist
non-degenerate symmetric bilinear forms $b,c$ on $V$ such that $u=L_b^{-1} L_c$. Hence, the preceding construction shows that
 $h(u)$ is bireflectional in $\Ortho(H_V^1)$. This construction will not be used in our proof of Theorem \ref{theo:Wonenburger}, however.
\end{Rem}

\subsection{The Hermitian invariants of a hyperbolic extension}

Here, we shall prove the following result:

\begin{prop}\label{prop:extensionhyperbolic}
Let $u \in \GL(V)$. Then all the Wall invariants of $H_{-1}(u)$ are hyperbolic.
\end{prop}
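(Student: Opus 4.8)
The plan is to reduce to the case where $u$ is a cyclic automorphism whose minimal polynomial is a power of a single irreducible palindromial, then exhibit an explicit totally isotropic subspace of half the dimension.

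First I would use the additivity of Wall invariants under orthogonal sums, namely $((b,u)\bot(b',u'))_{p,r}\simeq (b,u)_{p,r}\bot(b',u')_{p,r}$, together with the decomposition $H_{-1}(u_1\oplus u_2)\simeq H_{-1}(u_1)\bot H_{-1}(u_2)$ established in Section \ref{section:extensionintroduction}. Writing $u$ as a direct sum of cyclic automorphisms with minimal polynomials $p^r$ for $p\in\Irr(\F)\cup\{t\}$, and recalling that hyperbolicity of skew-Hermitian forms is preserved under orthogonal sums, it suffices to treat each cyclic summand separately. For a summand whose minimal polynomial is a power of $t\pm 1$ we must check the relevant quadratic Wall invariant is hyperbolic; for a summand whose minimal polynomial is $p^r$ with $p$ a palindromial $\neq t\pm 1$ we must check the Hermitian (here skew-Hermitian) invariant $(H_V^{-1},h(u))_{p,r}$ is hyperbolic; for a summand whose minimal polynomial is a power of a non-palindromial $p$, note $p^\sharp\neq p$ and the corresponding invariant is a Jordan-number condition, not a form, so there is nothing to check beyond observing $n_{p,k}(h(u))=n_{p^\sharp,k}(h(u))$, which holds automatically.

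So fix a cyclic $u$ on $V$ with minimal polynomial $q=p^r$, and consider $h(u)=u\oplus (u^{-1})^t$ on $V\times V^\star$ with the symplectic form $H_V^{-1}$. The key observation is that $V\times\{0\}$ is a Lagrangian (totally isotropic of half dimension) subspace of $H_V^{-1}$ that is stable under $h(u)$, and likewise $\{0\}\times V^\star$. I would then argue directly that the presence of such an $h(u)$-stable Lagrangian forces every Wall invariant of $(H_V^{-1},h(u))$ to be hyperbolic: the construction of the invariant $(b,u)_{p,r}$ proceeds by passing to a subquotient $V_{p,r}$ of $\Ker m(v)^r$ (where $v=h(u)+h(u)^{-1}$), and an $h(u)$-stable Lagrangian $N$ of $V\times V^\star$ induces, on that subquotient, a subspace $\overline N$ on which the form $\overline{b_{p,r}}$ (hence also its $\L$-extension) vanishes identically, and whose dimension is exactly half that of $V_{p,r}$. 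Concretely, $N=V\times\{0\}$ meets each of $\Ker m(v)^k$, $\im m(v)$, etc., in the ``expected'' dimension because $v$ restricted to $V\times\{0\}$ and to $\{0\}\times V^\star$ are transposes of one another; the transpose-duality between the two halves is what makes the dimension count come out to exactly one half at every stage. The same argument handles the quadratic invariants $(H_V^{-1},h(u))_{t\pm 1,r}$, since those are also built as induced non-degenerate forms on a subquotient of $\Ker(h(u)\mp\id)^r$, and $N$ again contributes a half-dimensional isotropic subspace.

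The main obstacle is the bookkeeping in that last step: verifying that $N=V\times\{0\}$ really does descend to a subspace of $V_{p,r}$ of dimension exactly $\tfrac12\dim V_{p,r}$, rather than merely to a proper isotropic subspace. The cleanest way to pin this down is probably to compute $v=h(u)+h(u)^{-1}=\bigl(u+u^{-1}\bigr)\oplus\bigl((u+u^{-1})^{-1}\bigr)^t$ wait --- more precisely $(u^{-1})^t+((u^{-1})^t)^{-1}=(u^{-1}+u)^t$, so $v$ is block-diagonal as $w\oplus w^t$ with $w=u+u^{-1}$ acting on $V$; then $\Ker m(v)^k=\Ker m(w)^k\times\Ker m(w^t)^k$ and $\im m(v)^k=\im m(w)^k\times\im m(w^t)^k$ split compatibly with the two halves, and since $\dim\Ker m(w^t)^k=\dim\Ker m(w)^k$ and similarly for images, the subquotient $V_{p,r}$ splits as a direct sum of two subspaces of equal dimension, one inside $V\times\{0\}$ and one inside $\{0\}\times V^\star$, each totally isotropic for $\overline{b_{p,r}}$ (because $H_V^{-1}$ pairs the two halves and kills each half separately). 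That exhibits $\overline{b_{p,r}}^{\,\L}$ as having a Lagrangian, hence hyperbolic, and identically for the quadratic invariants. I would present the argument in that block-diagonal form, as it makes both the dimension count and the isotropy transparent.
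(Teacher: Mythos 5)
Your proposal is correct and takes essentially the same approach as the paper: reduce to cyclic $u$ via compatibility of $H_{-1}$ and the Wall invariants with direct sums, then exhibit isotropic elements in the subquotient coming from $V\times\{0\}$, using the block-diagonal action of $h(u)$ and $v=h(u)+h(u)^{-1}$ on $V\times\{0\}$ and $\{0\}\times V^\star$. The only cosmetic difference is that the paper, exploiting the fact that the subquotient is $2$-dimensional over $\L$ in the cyclic case, is content to produce a single nonzero isotropic vector (the class of $(x,0)$ with $x\notin\im p(u)$), whereas you identify the full half-dimensional totally isotropic subspace coming from $V\times\{0\}$, which is slightly cleaner and would work without the cyclic reduction.
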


\begin{proof}
Using the last part of Section \ref{section:extensionintroduction} and the fact that the Wall invariants are compatible with direct sums,
we find that it suffices to consider the case where $u$ is cyclic with minimal polynomial $p^r$ for some
$p \in \Irr(\F)$ and some $r \geq 1$.
If in addition $p \neq p^\sharp$ then $h(u)$ has exactly two primary invariants, $p^r$ and $(p^\sharp)^r$, and hence
all the quadratic and Hermitian invariants of $H_{-1}(u)$ vanish.

This leaves us with only three remaining cases.

\vskip 3mm
\noindent \textbf{Case 1: $p=(t-\eta)^r$ for some odd $r \geq 1$ and some $\eta =\pm 1$.} \\
Then $h(u)$ has exactly two Jordan cells, both attached to $t-\eta$ and of size $r$, and hence all the Wall invariants
of $H_{-1}(u)$ vanish.

\vskip 3mm
\noindent \textbf{Case 2: $p^\sharp=p$ and $p \not\in \{t+1,t-1\}$.} \\
Then $h(u)$ has exactly two Jordan cells, both attached to $(p,r)$.
The pair $P:=H_{-1}(u)$ has exactly one non-zero Wall invariant, namely $P_{p,r}$,
and it is defined on $W:=(V \times V^\star)/\im p(h(u))$, which is an $\L$-vector space of dimension $2$ for
$\L:=\F[t]/(p)$. Note also that $\im p(h(u))=\im p(u) \times \im p(u^{-1})^t$.
Set $v:=h(u)+h(u)^{-1}$.
In order to prove that the skew-Hermitian form $P_{p,r}$ is hyperbolic, it suffices to exhibit a non-zero isotropic vector for it.
Choose $x \in V \setminus \im p(u)$. Denote by $2d$ the degree of $p$, and set $m \in \F[t]$ such that $p(t)=t^d m(t+t^{-1})$.
For all $q\in \F[t]$, the second component of $q(h(u)) m(v)^{r-1}[x,0]$ is obviously zero, which yields
$$H_V^{-1} \bigl((x,0),q(h(u)) m(v)^{r-1}[x,0]\bigr)=0.$$
This leads to $\forall \lambda \in \L, \; c(X,\lambda\,X)=0$, where $X$ denotes the class of $(x,0)$ in $W$ and $c$ the $\F$-bilinear form induced by
$b : (x',y') \mapsto H_V^{-1}\bigl(x',m(v)^{r-1}(y')\bigr)$ on $W$.
This leads to $P_{p,r}(X,X)=0$. Since $X\neq 0$, we deduce that $P_{p,r}$ is isotropic, and hence hyperbolic.

\vskip 3mm
\noindent \textbf{Case 3: $p=(t-\eta)^r$ for some even $r \geq 1$ and some $\eta \geq 1$.} \\
In that case, all the Hermitian Wall invariants of $P:=H_{-1}(u)$ vanish, and so do all the quadratic Wall invariants
with the exception of $P_{t-\eta,r}$, which is a non-degenerate symmetric bilinear form on the $2$-dimensional space
$(V \times V^\star)/\im (h(u)-\eta\,\id)$. Then, with a similar proof as in Case 2, one chooses $x \in V \setminus \im (u-\eta\,\id_V)$
and one finds that the class $X$ of $(x,0)$ in $(V \times V^\star)/\im (h(u)-\eta\,\id)$ is a non-zero isotropic vector for $P_{t-\eta,r}$,
which shows that this form is hyperbolic.
\end{proof}

\section{Symplectic automorphisms that are products of two symplectic involutions}\label{section:Nielsen}

In this section, we prove Theorem \ref{theo:Nielsen}. In fact, we shall prove an even more enlightening result:

\begin{theo}
Let $(b,u)$ be a $-1$-isopair. The following conditions are equivalent:
\begin{enumerate}[(i)]
\item $u$ is the product of two involutions in $\Sp(b)$.
\item All the Wall invariants of $(b,u)$ are hyperbolic, and all the Jordan numbers of $u$ are even.
\item $(b,u)$ is isometric to $H_{-1}(v)$ for some vector space $V$ and some automorphism $v$ of $V$
that is similar to its inverse (i.e.\ bireflectional in $\GL(V)$).
\end{enumerate}
\end{theo}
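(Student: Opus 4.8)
The plan is to prove the cycle of implications $(iii) \Rightarrow (i) \Rightarrow (ii) \Rightarrow (iii)$.

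The implication $(iii) \Rightarrow (i)$ is essentially already in hand: by Proposition \ref{extensionbireflectional}, if $v$ is similar to its inverse then $h(v)$ is a product of two involutions in $\Sp(H_V^{-1})$, and being bireflectional is invariant under isometry of $-1$-isopairs (this is the second Remark after the definition of bireflectional isopairs, together with the opening Remark there). So an isometry $(b,u) \simeq H_{-1}(v)$ transports the decomposition.

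The implication $(i) \Rightarrow (ii)$ combines two ingredients. First, suppose $u = s_1 s_2$ with $s_1, s_2$ symplectic involutions. I would argue that $(b,u)$ is then isometric to $H_{-1}(w)$ for a suitable $w$, as follows: in the ``doubled'' picture, conjugating $h(u)$ by the two involutions $h(s_1), h(s_2)$ shows $(b,u)$ embeds into such an extension; more carefully, one realizes $V$ with its involution $s_1$ inside $V \times V^\star$ so that $s_1$ becomes $\kappa(c)$ for a symmetric form $c$ and $s_2$ becomes $\kappa(c')$ for another symmetric form, and then $u = s_1 s_2 = \kappa(c)\kappa(c') = h(L_c^{-1}L_{c'})$ by the computation in Section \ref{section:extension} — here the point is that the presence of a symplectic involution $s_1$ allows one to split $V$ as a sum of a Lagrangian-type pair so that $b$ becomes $H_V^{-1}$ and $s_1$ becomes the standard $\kappa$. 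This identifies $(b,u)$ with $H_{-1}(v)$ for $v := L_c^{-1}L_{c'}$, which is automatically similar to its inverse since $c,c'$ are symmetric (again Section \ref{section:extension}). Once $(b,u) \simeq H_{-1}(v)$, Proposition \ref{prop:extensionhyperbolic} gives that all Wall invariants are hyperbolic, and the Remark computing $n_{p,k}(h(v)) = n_{p,k}(v) + n_{p^\sharp,k}(v)$ together with the similarity of $v^t$ to $v$ forces every Jordan number of $h(v)$ to be even. So $(ii)$ holds. Actually the cleanest route is to fold this reasoning into proving $(i)\Rightarrow(iii)$ directly, then deduce $(ii)$ from $(iii)$ via Propositions \ref{prop:extensionhyperbolic} and the Jordan-number remark.

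The implication $(ii) \Rightarrow (iii)$ is the substantive part and where the bulk of the work lies. Here I would use Theorem \ref{theo:indecomposablesymplectic} to decompose $(b,u)$ into indecomposable $-1$-isopairs, and then, using that Wall invariants and Jordan numbers are additive over orthogonal sums, regroup the summands. The hypothesis that all Jordan numbers are even lets me pair up indecomposable summands of the first type ($u$ cyclic with minimal polynomial $p^r$, $p$ a palindromial $\neq t\pm1$) and of the third type ($(t-\eta)^r$ cyclic with $r$ even) into pairs $Q \bot Q'$; the hypothesis that the Wall invariants are hyperbolic is what guarantees each such pair is isometric to the relevant piece of some $H_{-1}(\text{cyclic})$. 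For the second type (minimal polynomial $(pp^\sharp)^r$) and fourth type (two primary invariants $(t-\eta)^r$, $r$ odd) the summand already has vanishing Wall invariants, and one checks directly it is isometric to $H_{-1}$ of a cyclic automorphism with minimal polynomial $(p^\sharp)^r$ or $(t-\eta)^r$ respectively — this uses the $\kappa$/$h$ dictionary. The main obstacle will be the palindromial case in type one (Case 2 in the proof of Proposition \ref{prop:extensionhyperbolic}): one must show that a rank-two hyperbolic skew-Hermitian form over $\L = \F[t]/(p)$, carried by two cyclic Jordan blocks of equal size attached to $(p,r)$, forces the pair to be isometric to $H_{-1}(C)$ where $C$ is a single cyclic block with minimal polynomial $p^r$ — i.e. one must reconstruct the full isopair from its Wall invariant plus Jordan data, which is exactly the content of Wall's theorem but requires explicitly exhibiting the isometry (or at least matching all invariants of the candidate $H_{-1}(C)$ against those of the given pair using Proposition \ref{prop:extensionhyperbolic} and the Jordan-number remark). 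Finally, reassembling all the paired-up $H_{-1}$-summands gives $H_{-1}(v)$ for $v$ the direct sum of the cyclic blocks, which is similar to its inverse because each block is, completing the cycle.
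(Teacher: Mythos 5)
Your implication (iii) $\Rightarrow$ (i) matches the paper's, and your (ii) $\Rightarrow$ (iii) is sound in spirit, though unnecessarily laborious: rather than splitting $(b,u)$ into indecomposables and pairing them up, the paper simply picks an auxiliary automorphism $v$ with $n_{p,r}(v)=\tfrac{1}{2}n_{p,r}(u)$ for all $(p,r)$ (possible since all Jordan numbers of $u$ are even), checks that $v$ is similar to $v^{-1}$ and that $h(v)$ has the same Jordan numbers as $u$, and then invokes Wall's theorem once: both $(b,u)$ and $H_{-1}(v)$ have all Wall invariants hyperbolic of the same rank, hence equivalent. That is all that is needed — no indecomposable-by-indecomposable matching.

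The genuine gap is in your argument for (i) $\Rightarrow$ (ii) (or its variant (i) $\Rightarrow$ (iii)). You claim that a symplectic involution $s_1$ "allows one to split $V$ as a sum of a Lagrangian-type pair so that $b$ becomes $H_V^{-1}$ and $s_1$ becomes the standard $\kappa$." This is false. If $s_1\in\Sp(b)$ with $s_1^2=\id$, its eigenspaces $V_\pm=\Ker(s_1\mp\id)$ satisfy $b(x,y)=b(s_1x,s_1y)=-b(x,y)$ for $x\in V_+$, $y\in V_-$, hence $V_+\perp_b V_-$, so $V_+$ and $V_-$ are both $b$-\emph{regular}, not Lagrangian. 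In particular $s_1$ does not in general swap two complementary Lagrangians, and so it cannot be realized as some $\kappa(c)$ (the simplest counterexample being $s_1=\id_V$). The representation $u=\kappa(c)\kappa(c')$ that you are after is not available globally. What the paper actually does for (i) $\Rightarrow$ (ii) is two-pronged: for the Hermitian and quadratic Wall invariants, it works directly on the quotient spaces $V_{p,r}$, using that $\overline{s}$ acts $\L$-semilinearly (for palindromial $p\neq t\pm1$) or skew-commutes with the relevant operator (for $p=t\mp1$), and shows the fixed space of $\overline{s}$ supplies a half-dimensional totally isotropic subspace; for the remaining Jordan numbers $n_{p,r}(u)$ with $p\neq p^\sharp$, it restricts to $E_p\oplus E_{p^\sharp}$, where $E_p$ and $E_{p^\sharp}$ \emph{are} a pair of complementary Lagrangians swapped by $s$ (because $sus^{-1}=u^{-1}$ and $E_p,E_{p^\sharp}$ are totally $b$-isotropic), so that the $\kappa$-dictionary applies to this piece only, writing the restriction of $u$ as a product $\kappa(B)\kappa(C)$ of two symplectic (not symmetric) forms and invoking Scharlau's result on pairs of alternating forms to get even Jordan numbers. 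Your global $\kappa$-splitting does not exist, so this step needs to be replaced.
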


The implication (iii) $\Rightarrow$ (i) is a straightforward consequence of Proposition \ref{extensionbireflectional}.
Let us now prove that (ii) implies (iii). Assume that condition (ii) holds.
Because all the Jordan numbers of $u$ are even, we can choose a vector space $V$ together with an automorphism $v$ of $V$
such that $n_{p,r}(v)=\frac{1}{2} n_{p,r}(u)$ for all $p \in \Irr(\F)$ and all $r \geq 1$.
We shall prove that $(b,u) \simeq H_{-1}(v)$.

First of all, $n_{p,r}(v)=\frac{1}{2} n_{p,r}(u)=\frac{1}{2} n_{p^\sharp,r}(u)=n_{p^\sharp,r}(v)=n_{p,r}(v^{-1})$
for all $p \in \Irr(\F)$ and all $r \geq 1$, to the effect that $v$ is conjugated to $v^{-1}$ in $\GL(V)$.
Moreover $n_{p,r}(h(v))=n_{p,r}(v)+n_{p^\sharp,r}(v)=n_{p,r}(u)$ for all $p \in \Irr(\F)$ and all $r \geq 1$.
By Wall's classification of conjugacy classes in symplectic groups, it only remains to prove that
$h(v)$ and $u$ have equivalent Wall invariants. To obtain this, note that for all $p \in \Irr(\F)$ such that $p^\sharp=p$,
both the bilinear or skew-Hermitian forms $H_{-1}(v)_{p,r}$ and $(b,u)_{p,r}$
are hyperbolic (by Proposition \ref{prop:extensionhyperbolic} for the former, by condition (ii) for the latter), and
they have the same rank $n_{p,r}(u)$, therefore they are equivalent.
We conclude that $(b,u) \simeq H_{-1}(v)$, which proves condition (iii).

\vskip 3mm
It remains to prove that condition (i) implies condition (ii).
So, assume that $u=ss'$ for some involutions $s,s'$ in $\Sp(b)$. We shall forget $s'$ and simply note that
$sus^{-1}=u^{-1}$ (since $s's=u^{-1}$).

\vskip 3mm
\noindent \textbf{The Hermitian Wall invariants}

Let $p \in \Irr(\F)$ be a palindromial of even degree $2d$, and write $p(t)=t^d m(t+t^{-1})$ for some monic $m \in \F[t]$ of degree $d$.
Let $r \geq 1$ be a non-negative integer. We shall consider the $(b,u)_{p,r}$ Wall invariant.
Set $\L:=\F[t]/(p)$ and denote by $\lambda \mapsto \lambda^\bullet$ its involution that takes the class of $t$ to its inverse.

Set $v:=u+u^{-1}$ and note that $s$ commutes with $v$.
It follows that $s$ induces an endomorphism $\overline{s}$ of
the underlying $\F$-vector space $V_{p,r}$ of $(b,u)_{p,r}$, but it is not $\L$-linear, rather it is
$\L$-semilinear because $sus^{-1}=u^{-1}$.
We shall inspect the properties of $\overline{s}$ with respect to the form $(b,u)_{p,r}$.
First of all, we recall that this form is defined thanks to the $\F$-bilinear
form $c : V_{p,r}^2 \rightarrow \F$ induced by $(x,y) \mapsto b(x,m(v)^{r-1}(y))$.
Note that $s$ is symmetric for this form. Moreover, for all $x,y$ in
$V$, we have
$$b\bigl(s(x),m(v)^{r-1}(s(y))\bigr)=
b\bigl(x,s^{-1}(m(v)^{r-1}(s(y)))\bigr)=b\bigl(x,m(v)^{r-1}(y)\bigr),$$
and it follows that $\overline{s}$ is an isometry for $c$.
Finally, let $x,y$ belong to $V_{p,r}$.
Then, for all $\lambda \in \L$,
$$c(\overline{s}(x),\lambda \overline{s}(y))=c(\overline{s}(x),\overline{s}(\lambda^\bullet y))
=c(x,\lambda^\bullet y)=f_p(\lambda^\bullet (b,u)_{p,r}(x,y))=f_p(\lambda (b,u)_{p,r}(x,y)^\bullet)$$
and hence
$$(b,u)_{p,r}(\overline{s}(x),\overline{s}(y))=(b,u)_{p,r}(x,y)^\bullet=-(b,u)_{p,r}(y,x).$$
Now, consider the subfield $\K:=\{\lambda \in \L : \lambda^\bullet=\lambda\}$: one has $[\L:\K]=2$,
and $\overline{s}$ is a $\K$-linear map.
It follows from the above identity that $(b,u)_{p,r}$ induces an alternating bilinear form on the $\K$-vector space
$W:=\Ker (\overline{s}-\id)$. We can therefore choose a $\K$-linear subspace $Z$ of $W$ such
that $\dim_{\K} Z\geq \frac{\dim_\K W}{2}$ and $(b,u)_{p,r}$ vanishes everywhere on $Z^2$.

Let us now choose $\alpha \in \L \setminus \{0\}$ such that $\alpha^\bullet=-\alpha$. Because $\overline{s}$
is $\L$-semilinear, the mapping $x \mapsto \alpha x$ induces a $\K$-linear isomorphism from $W$ to $\Ker (\overline{s}+\id)$, and hence
$Z+\alpha Z$ is an $\L$-linear subspace of $V_{p,r}$ with dimension over $\K$ at least $\frac{\dim_\K V_{p,r}}{2}$,
and hence with dimension over $\L$ at least $\frac{\dim_\L V_{p,r}}{2}\cdot$
Since $(b,u)_{p,r}$ is sesquilinear it is clear that it vanishes everywhere on $(Z+\alpha Z)^2$, and we conclude that it is
hyperbolic.

\vskip 3mm
\noindent \textbf{The quadratic Wall invariants}

Next, we consider the Wall invariant $(b,u)_{t-\eta,2r}$ for some $\eta \in \{-1,1\}$ and some $r \geq 1$.
This invariant is the (non-degenerate) symmetric bilinear form $c$ induced
by $(x,y) \mapsto b\bigl(x,(u-u^{-1})(v-2\eta\id)^{r-1}(y)\bigr)$ on the quotient space
$V_{t-\eta,2r}:=\Ker(u-\eta \id)^{2r}/\bigl(\Ker(u-\eta \id)^{2r-1}+(\im (u-\eta\id) \cap \Ker(u-\eta \id)^{2r})\bigr)$.
The endomorphism $s$ induces an endomorphism $\overline{s}$ of $V_{t-\eta,2r}$.
This time around, we note that $s$ skew-commutes with $w:=(u-u^{-1})(v-2\eta\id)^{r-1}$
and hence $c(\overline{s}(x),\overline{s}(y))=-c(x,y)$ for all $x,y$ in $V_{t-\eta,2r}$.
In particular, both subspaces $\Ker(\overline{s}-\id)$ and $\Ker(\overline{s}+\id)$
are totally $c$-isotropic, and since at least one of them has its dimension greater than or equal to
$\frac{1}{2} \dim V_{t-\eta,2r}$ we deduce that $c$ is hyperbolic (note that this shows, since $c$ is non-degenerate, that
$\Ker(\overline{s}-\id)$ and $\Ker(\overline{s}+\id)$ actually have the same dimension).

\vskip 3mm
\noindent \textbf{The Jordan numbers}

Finally, we prove that the Jordan numbers of $u$ are all even.
As the Hermitian and quadratic Wall invariants of $u$ are all hyperbolic, all the corresponding Jordan numbers are even,
and hence only one type of Jordan number of $u$ needs to be examined: those of the form
$n_{p,r}(u)$ with $p \in \Irr(\F)$ such that $p \neq p^\sharp$, and $r \geq 1$.
For any $p \in \Irr(\F)$, denote by $E_p:=\underset{k \geq 0}{\bigcup} \Ker p(u)^k$ the characteristic subspace of $u$ attached to $p$.
Since $u$ is a $b$-isometry, it is known that the $b$-orthogonal complement of $E_p$
is $\underset{q \in \Irr(\F) \setminus \{p^\sharp\}}{\bigoplus} E_q$, and in particular if $p \neq p^\sharp$ then
$E_p \oplus E_{p^\sharp}$ is $b$-regular whereas $E_p$ and $E_{p^\sharp}$ are totally $b$-singular.

Now, fix $p \in \Irr(\F)$ such that $p \neq p^\sharp$, and set $V:=E_p$, $V':=E_{p^\sharp}$ and $W:=V \oplus V'$.
Noting that $s u s^{-1}=u^{-1}$, we see that $s p(u) s^{-1}= p(u^{-1})$, and it follows that $s$ maps $V$ into $V'$ and
$V'$ back into $V$.
Since $V$ and $V'$ are totally $b$-singular whereas $W$ is $b$-regular, the mapping
$$\varphi : \begin{cases}
V' & \longrightarrow V^\star \\
x' & \longmapsto b(x',-)_{|V}
\end{cases}$$
is a vector space isomorphism, and hence
$$\Phi : \begin{cases}
W & \longrightarrow V \times V^\star \\
x+x' & \longmapsto (x,\varphi(x'))
\end{cases}$$
is also a vector space isomorphism.
Let $x,y$ in $V$ and $x',y'$ in $V'$. Then
$$H_V^{-1}(\Phi(x+x'),\Phi(y+y'))=b(x',y)-b(y',x)=
b(x',y)+b(x,y')=b(x+x',y+y'),$$
where the last inequality comes from the fact that $b$ is totally singular on both $V$ and $V'$.
Hence, $\Phi$ is an isometry from the symplectic form $\overline{b}$ induced by $b$ on $W$
to $H_V^{-1}$.
Denoting by $v$ the automorphism of $W$ induced by $u$, we deduce that the $-1$-isopair $(\overline{b},v)$
is isometric to $(H_V^{-1}, \Phi \circ v \circ \Phi^{-1})$, and
$a:=\Phi \circ s \circ \Phi^{-1}$ is an involution of $\Sp(H_V^{-1})$ that exchanges $V \times \{0\}$ and $\{0\} \times V^\star$.
As seen in Section \ref{kappasection}, this yields $a=\kappa(B)$ for some symplectic form $B$ on $V$.
Likewise, $a^{-1} \circ \Phi \circ v \circ \Phi^{-1}$ is an involution of $\Sp(H_V^{-1})$ that exchanges $V \times \{0\}$
and $\{0\} \times V^\star$, and hence it equals $\kappa(C)$ for some symplectic form $C$ on $V$.

We conclude that $\Phi \circ v \circ \Phi^{-1}=\kappa(B) \circ \kappa(C)=h(w)$ for $w:=L_B^{-1} L_C$.
As $B$ and $C$ are symplectic forms on $V$, it is known that all the Jordan numbers of $w$ are even (see e.g.\ \cite{Scharlaupairs}).
Hence, all the Jordan numbers of $v$ are even. In particular $n_{p,r}(v)$, which equals $n_{p,r}(u)$, is even.
This completes the proof of Theorem \ref{theo:Nielsen}.

\section{Bireflectionality in orthogonal groups}\label{section:Wonenburger}

In this short section, we use the opportunity of the previous constructions to give a quick proof of
Wonenburger's result on orthogonal groups (Theorem \ref{theo:Wonenburger}). Once more, it suffices to consider the case where
$(b,u)$ is an indecomposable $1$-isopair.
We have seen in Theorem \ref{theo:indecomposableorthogonal} that there are four types of such indecomposable pairs, and for the sake of the proof
we regroup them in only two cases: either $u$ is cyclic and its minimal polynomial is a palindromial, or $u$ has exactly two
invariant factors, both equal to $(t-\eta)^{2r}$ for some $\eta =\pm 1$ and some integer $r \geq 1$.

\vskip 3mm
\noindent \textbf{Case 1: $u$ has exactly two invariant factors, both equal to $(t-\eta)^{2r}$ for some $\eta =\pm 1$
and some integer $r \geq 1$.}

Choose a cyclic endomorphism $v$ with minimal polynomial $(t-\eta)^{2r}$ (and underlying vector space $W$). Then $v$ is similar to its inverse, and it follows from
Proposition \ref{extensionbireflectional} that $h(v)$ is bireflectional in $\Ortho(H_W^1)$. Besides,
$H_1(v)$ and $(b,u)$ have all their Wall invariants equal to zero, and they have the same Jordan numbers.
Hence, $H_1(v) \simeq (b,u)$, and we conclude that $u$ is bireflectional in $\Ortho(b)$.

\vskip 3mm
\noindent \textbf{Case 2: $u$ is cyclic and its minimal polynomial is a palindromial.}

Denote by $V$ the underlying vector space of $(b,u)$, and choose a cyclic vector $x$ for $u$. We have seen in the proof of Theorem \ref{theo:GL} that
there exists an involution $s \in \GL(V)$ such that $s(u^k(x))=u^{-k}(x)$ for all $k \in \Z$, and $us$ is also an involution.
Better still, $s$ is a $b$-isometry: indeed, for all $k,l$ in $\Z$, we have
$$b\bigl(s(u^k(x)),s(u^l(x))\bigr)=b\bigl(u^{-k}(x),u^{-l}(x)\bigr)=b\bigl(u^l(x),u^k(x)\bigr)=b\bigl(u^k(x),u^l(x)\bigr).$$
We deduce that $us$ is also an isometry for $b$, and finally $u=(us)s$ is bireflectional in $\Ortho(b)$.

\begin{Rem}
Professor Klaus Nielsen pointed us on a serious error in
Wonenburger's proof \cite{Wonenburger} of Theorem \ref{theo:Wonenburger}: on the last line of page 337, the vector $w$ is not in $P_1$ if $n$ is even
and $S$ is an orthogonal transformation, rather it is in $Q_1$. This essentially corresponds to the situation in Case 1 above.
\end{Rem}


\begin{thebibliography}{1}
\bibitem{Awa}
D. Awa, R.J. de La Cruz,
{Each real symplectic matrix is a product of symplectic involutions,}
Linear Algebra Appl.
{\bf 589} (2020), 85--95.

\bibitem{Bungerproc}
F. B\"unger,
{Products of involutions in unitary groups,}
Proceedings of the 4th International Congress of Geometry, Academy of Athens, Aristotle University of Thessaloniki, pp. 93-100, 1997.

\bibitem{Bungerthesis}
F. B\"unger, Involutionen als Erzeugende in unit\"aren Gruppen,
\newblock{PhD thesis,}
\newblock{Universit\"at zu Kiel,}
\newblock{1997.}

\bibitem{Djokovic}
D.\v{Z}. Djokovi\'c,
{Products of two involutions,}
Arch. Math. (Basel)
{\bf 18} (1967), 582--584.

\bibitem{Frobenius}
G. Frobenius,
{\"{U}ber die mit einer Matrix vertauschbaren Matrizen,}
Sitzungsber. Preuss. Akad. Wiss.
(1910), 3--15.

\bibitem{Gow}
R. Gow,
{Products of two involutions in classical groups of characteristic $2$,}
J. Algebra
{\bf 71} (1981), 583--591.

\bibitem{Gustafsonetal}
W.H. Gustafson, P.R. Halmos, H. Radjavi,
{Products of involutions,}
Linear Algebra Appl.
{\bf 13} (1976), 157--162.

\bibitem{HoffmanPaige}
F. Hoffman, E.C. Paige,
{Products of two involutions in the general linear group,}
Indiana Univ. Math. J.
{\bf 20} (1971), 1017--1020.

\bibitem{dLC}
R.J. de La Cruz,
{Each symplectic matrix is a product of four symplectic involutions,}
Linear Algebra Appl.
{\bf 466} (2015), 382--400.

\bibitem{Scharlaupairs}
W. Scharlau,
{Paare alternierender Formen,}
Math. Z.
{\bf 147} (1976), 13--19.

\bibitem{dSPsumprod}
C. de Seguins Pazzis,
{The sum and the product of two quadratic matrices: regular cases,}
Adv. Appl. Clifford Algebras
{\bf 32} (2022).

\bibitem{dSPsumexceptional}
C. de Seguins Pazzis,
{The sum of two quadratic matrices: Exceptional cases,}
Linear Algebra Appl.
{\bf 653} (2022), 357--394.

\bibitem{dSPsum3}
C. de Seguins Pazzis,
{A note on sums of three square-zero matrices,}
Linear Multilinear Algebra
{\bf 65} (2017), 787--805.

\bibitem{dSPsum2quad}
C. de Seguins Pazzis,
{Sums of two square-zero selfadjoint or skew-selfadjoint endomorphisms,}
preprint, arXiv, https://arxiv.org/abs/2210.03955

\bibitem{Wall}
G.E. Wall,
{On the conjugacy classes in orthogonal, symplectic and unitary groups,}
J. Austral. Math. Soc.
{\bf 3-1} (1963), 1--62.

\bibitem{Wonenburger}
M.J. Wonenburger,
{Transformations which are products of two involutions,}
J. Math. Mech.
{\bf 16} (1966), 327--338.

\end{thebibliography}
\end{document}